\DeclareMathAlphabet{\mathpzc}{OT1}{pzc}{m}{it}
\newtheorem{theorem}{\bf Theorem}[section]
\newtheorem{lemma}[theorem]{\bf Lemma}
\newtheorem{corol}[theorem]{\bf Corollary}
\newtheorem{question}[theorem]{\bf Question}
\theoremstyle{definition}
\newtheorem{defi}[theorem]{\bf Definition}
\newtheorem{rmk}[theorem]{\bf Remark}
\newtheorem*{theoremola}{Theorem \ref{thm:orbla}}
\newcommand{\A}{\mathbb A}
\newcommand{\C}{\mathbb C}
\newcommand{\Ccal}{\mathcal C}
\newcommand{\Ecal}{\mathcal E}
\newcommand{\bE}{\mathbb E}
\newcommand{\Lcal}{\mathcal L}
\newcommand{\Ocal}{\mathcal O}
\newcommand{\Pro}{\mathbb P}
\newcommand{\Pcal}{\mathcal P}
\newcommand{\Q}{\mathbb Q}
\newcommand{\Ucal}{\mathcal U}
\newcommand{\Xcal}{\mathcal X}
\newcommand{\Ycal}{\mathcal Y}
\newcommand{\Z}{\mathbb Z}
\newcommand{\Cont}{\text{Cont}}
\newcommand{\rk}{\text{rk}}
\newcommand{\Adm}[2]{Adm_{#2}\left({ #1}\right)}
\title{All the $\lambda_1$'s on  cyclic admissible covers}
\author{Renzo Cavalieri}
\author{Bryson Owens}
\author{Seamus Somerstep}
\address{Renzo Cavalieri, Department of Mathematics, Colorado State University, Fort Collins, CO}
\email{renzo@math.colostate.edu}
\address{Bryson Owens, Department of Mathematics, Statistics and Computer Science, University of Illinois, Chicago, IL}
\email{bowens21@uic.edu}
\address{Seamus Somerstep, Department of Statistics, University of Michigan, Ann Arbor, MI}
\email{smrstep@umich.edu}
\begin{document}

\begin{abstract} 
We compute the degree of Hurwitz-Hodge classes $\lambda_1^e$ on one dimensional moduli spaces of cyclic admissible covers of the projective line. We also compute the degree of the 
the first Chern class of the Hodge bundle $\lambda_1$ for all one dimensional moduli spaces. In higher dimension, we  express the divisor class $\lambda_1$ as a linear combination of $\psi$ classes and boundary strata.  
\end{abstract}

\maketitle

\section{Introduction}

A cyclic cover is the quotient map of a curve $C$ by the effective action of a cyclic group $\Z/ d\Z$; for example the projection to the $x$-axis from the curve $\{y^d = p(x)\}\subset \A^2$ gives an affine model determining by the Riemann existence theorem a cyclic cover of $\Pro^1$. Since the complex structure of a cyclic cover of $\Pro^1$ is essentially determined by the location and the monodromies of its branch points, families of cyclic covers are  among the most classical ways to construct subvarieties of moduli spaces of curves.

The perspective of Hurwitz spaces, together with the Harris-Mumford   admissible covers compactification in \cite{hm:kd}, proposes to study families of cyclic covers with fixed discrete invariants (genus, degree, and monodromy data around the branch points) as standalone moduli spaces, connected to moduli spaces of curves by natural source and branch morphisms. 

With the development of the language of orbifolds and stacks, \cite{acv:ac} interpret the normalization of the Harris-Mumford space as a (connected component of a) smooth stack of {\it twisted stable maps} from some orbifold modification of the base curves to a quotient stack $[pt./G]$. We use the terminology {\it moduli space of cyclic admissible covers} $\Adm{m_1, \ldots, m_n}{d}$ to denote this smooth stack rather than the Harris-Mumford version.

The branch and source morphisms from spaces of admissible covers create a correspondence that connects the geometry of moduli spaces of higher genus (cover) curves with the combinatorics of the configuration of branch points, thus making explicit and accessible certain geometric information, see \cite{bp:drg2}.
We illustrate this philosophy by studying the Hodge bundle $\bE$, whose fiber over a moduli point consists of the global sections of the relative dualizing sheaf of the cover curve, in terms of combinatorial information from $\overline{M}_{0,n}$. Further, the cyclic action on the curve induces an action on the Hodge bundle.
The language of  twisted stable maps is very well attuned to study the Chern classes of subrepresentations $\bE_e$ of the Hodge bundle, called {\it Hurwitz-Hodge classes} in \cite{bgp:crc}. Hurwitz-Hodge integrals, i.e. intersection numbers of these classes, are used in the computations of orbifold Gromov-Witten invariants \cite{cc:c3z3, bg:crchhi}, and were a key tool in the development and study of the crepant resolution conjecture in the Hard Lefschetz case \cite{bg:crchl}.

While the orbifold version of the Grothendieck-Riemann-Roch theorem \cite{t:grr} provides a powerful technique to compute individual Hurwitz-Hodge integrals \cite{z:choi}, it conceals the rich algebraic and combinatorial structure that  Hurwitz-Hodge classes have when considered in families.
 
The main result of this article  highlights this structure in the case of the first Chern class of the bundles $\bE_e$, that we denote by $\lambda_1^e$. We compute the degree of these classes on all one dimensional moduli spaces of cyclic admissible covers.

\begin{theorem} \label{thm:orbla} Let $d$ be a positive integer,  $m_1, m_2, m_3, m_4$ a monodromy datum as in Definition \ref{def:mondat}, and $e$ an integer  between $0$ and $d-1$. Assume, without loss of generality, that 
$$
\left\langle\frac{em_1}{d}\right\rangle \leq\left\langle\frac{em_2}{d}\right\rangle  \leq\left\langle\frac{em_3}{d}\right\rangle 
\leq \left\langle\frac{em_4}{d}\right\rangle. 
$$
The degree of the orbifold class $\lambda^e_1$ on the one-dimensional space of degree $d$ cyclic admissible covers of a rational curve with monodromies $m_i$ is given by the following  formula:
\begin{equation}\label{eq:onedimorb}
\int_{\Adm{m_1, m_2, m_3, m_4}{p}}\lambda^e_1 =
\left\{
\begin{array}{cl}
\min\left\{\frac{1}{d}\left\langle\frac{em_1}{d}\right\rangle, \left(\sum_{i=1}^4\left\langle\frac{em_i}{d}\right\rangle\right) -1\right\} & \mbox{if}\ \left\langle\frac{em_1}{d}\right\rangle +\left\langle\frac{em_4}{d}\right\rangle \leq 1 \\ & \\
\min\left\{\frac{1}{d}\left(1-\left\langle\frac{em_4}{d}\right\rangle\right), 3- \left(\sum_{i=1}^4\left\langle\frac{em_i}{d}\right\rangle\right) \right\} & \mbox{if}\ \left\langle\frac{em_1}{d}\right\rangle +\left\langle\frac{em_4}{d}\right\rangle > 1
\end{array}
\right.
\end{equation}
\end{theorem}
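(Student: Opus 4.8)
The plan is to compute the degree by reducing to a one-parameter degeneration and applying a localization/degeneration argument on $\overline{M}_{0,4}\cong\Pro^1$. Since $\Adm{m_1,m_2,m_3,m_4}{d}$ is finite over $\overline{M}_{0,4}$, it suffices to know how $\lambda_1^e$ behaves at the three boundary points and interpolate; equivalently, one can pull back to the universal curve over a generic pencil and use Grothendieck--Riemann--Roch for the orbifold Hodge bundle. Concretely, I would first recall the splitting $\bE=\bigoplus_e\bE_e$ of the Hodge bundle into eigenspaces for the $\Z/d\Z$-action, and the fact that over the interior of $\overline{M}_{0,4}$ the rank of $\bE_e$ is computed by the Chevalley--Weil formula: $\operatorname{rk}\bE_e = -1 + \sum_{i=1}^4\langle em_i/d\rangle$ when $e\neq 0$ (with the usual correction when some $\langle em_i/d\rangle=0$). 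The first Chern class $\lambda_1^e$ is then a divisor class on the curve $\Adm{m_1,m_2,m_3,m_4}{d}$, and its degree is what we want.

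The key computational step is to analyze the three boundary points of $\overline{M}_{0,4}$, i.e.\ the three ways of partitioning $\{1,2,3,4\}$ into two pairs. Over each such point, an admissible cover degenerates to a nodal cover: the base becomes two $\Pro^1$'s glued at a node, the cover splits into pieces each of which is a cyclic admissible cover of $\Pro^1$ with three special points (two original branch points plus the node). The node carries a monodromy determined by the pair, namely $-(m_i+m_j)\bmod d$ on one side. On these three-pointed spaces the Hodge bundle eigenpieces are rigid (the three-pointed admissible cover spaces are points), so one reads off the local contribution of $\lambda_1^e$ from the behavior of the Hodge eigenbundle under normalization: $\bE_e$ on the nodal cover is an extension involving the eigenpieces on the two components together with a contribution from the node, governed by whether the node-monodromy $\langle e(m_i+m_j)/d\rangle$ is zero or not. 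I would set up the exact sequence relating $\bE_e$ of the total space to its restriction to the boundary and the vanishing/smoothing parameter, extract the order of vanishing of a generating section, and sum over the three boundary points. The factor $1/d$ appearing in the formula should emerge from the ramification of $\Adm{\cdots}{d}\to\overline{M}_{0,4}$ at the branch node, i.e.\ from the orbifold structure (a $\mu_d$ or $\mu_{d/\gcd}$ gerbe/ramification along the boundary divisor).

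After collecting the three boundary contributions, the degree of $\lambda_1^e$ is their sum, and the two cases in \eqref{eq:onedimorb} correspond to the two combinatorial regimes $\langle em_1/d\rangle+\langle em_4/d\rangle\leq 1$ versus $>1$, which control which of the pairings $\{i,j\}$ have vanishing node-monodromy and hence contribute a ``jump'' of size $1$ versus a fractional $1/d$ correction; the $\min$ reflects the fact that the Hodge eigenbundle rank is bounded (between $0$ and $2$ in the one-dimensional case after the Chevalley--Weil count), so the total order of vanishing cannot exceed the rank. I expect the main obstacle to be the careful bookkeeping at the boundary: correctly identifying the induced monodromy data and the $\mu_k$-stabilizers on each component, handling the edge cases where one or more $\langle em_i/d\rangle$ equals $0$ (so that the relevant marked point is unramified in the $e$-eigenspace and the rank drops), and tracking how the normalization sequence for the eigen-Hodge-bundle interacts with the orbifold node — in particular getting the exact rational coefficient $1/d$ rather than $1/\gcd(e,d)$ or similar. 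Once the local model at an orbifold node of an admissible cover is pinned down precisely, the global computation is a finite check over the three boundary points and the case analysis is forced.
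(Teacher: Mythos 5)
Your strategy---reading off $\deg\lambda_1^e$ as a sum of three local contributions at the boundary points of $\overline{M}_{0,4}$---is genuinely different from the paper's (which sets up an auxiliary vanishing integral on the parameterized space $\Adm{\Pro^1|m_1,m_2,m_3,m_4}{d}$ and extracts the answer by Atiyah--Bott localization), but as written it has a real gap at its central step. The degree of a line bundle on a complete one-dimensional stack is a global invariant; it is \emph{not} computed by summing ``local contributions'' at three chosen points unless you either exhibit a specific meromorphic section of $\bE_e$ whose zeros and poles are supported at the boundary, or invoke a nontrivial comparison theorem (e.g.\ the parabolic-degree/Deligne-extension formula expressing $\deg\bE_e$ through the local exponents of the rank-two hypergeometric local system $\bE_e\oplus\overline{\bE}_{d-e}$). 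You do neither: the phrase ``extract the order of vanishing of a generating section'' presupposes exactly the object whose existence and zero locus would have to be established, and the observation that the three-pointed admissible cover spaces are points tells you the fibers of $\bE_e$ over the boundary are rigid vector spaces, not what the extension of $\bE_e$ across the boundary looks like or what integer/fraction it contributes to the degree. This is precisely where the fractional answers such as $\frac{1}{d}\left\langle\frac{em_1}{d}\right\rangle$ must come from, and the proposal gives no mechanism that would produce them (your appeal to the gerbe structure accounts for an overall $1/d$, but not for the factor $\left\langle\frac{em_1}{d}\right\rangle$ versus $1-\left\langle\frac{em_4}{d}\right\rangle$ distinguishing the two regimes).

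A second concrete omission: your rank bookkeeping handles the cases $\rk\bE_e=0$, but the theorem also asserts that the degree vanishes when $\sum_i\left\langle\frac{em_i}{d}\right\rangle=3$, i.e.\ when $\bE_e$ has rank $2$; this is not a rank statement and does not follow from Chevalley--Weil. The paper gets it from the duality $(\bE^\vee)_e\cong(\bE_{d-e})^\vee$ combined with the $G$-Mumford relation, yielding $\lambda_1^e=\lambda_1^{d-e}$ and reducing the rank-$2$ case to the rank-$0$ case; some such symmetry or relation would have to appear in your argument too, and your interpretation of the $\min$ as a rank bound does not supply it. If you want to pursue your route, the honest version is the VHS/local-exponents computation (in the spirit of parabolic degree formulas for eigenbundles of families of cyclic covers), which would indeed localize the degree at the three cusps---but that theorem, the identification of the local exponents in terms of $\left\langle\frac{e m_i}{d}\right\rangle$ and $\left\langle\frac{e(m_i+m_j)}{d}\right\rangle$, and the comparison between the Deligne extension and the orbifold Hodge bundle on $\Adm{m_1,m_2,m_3,m_4}{d}$ all need to be stated and proved; none of this is in the proposal.
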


The next result computes the degree of the class $\lambda_1$, the first Chern class of the full Hodge bundle $\bE$, for all one dimensional moduli spaces of cyclic admissible covers.

\begin{theorem}
\label{thm:dimonenotorb}
Let $d$ be a positive integer and $m_1, m_2, m_3, m_4$ a monodromy datum. The degree of the class $\lambda_1$ on the one-dimensional space of degree $d$ cyclic admissible covers of a rational curve with monodromies $m_i$ is given by the following  formula, indexed by the power set of $[4] = \{1,2,3,4\}$.
\begin{equation}\label{eq:onedim}
\int_{\Adm{m_1, m_2, m_3, m_4}{d}}\lambda_1 = \frac{1}{24 d^2} \left( \sum_{I\in \Pcal([4])}(-1)^{|I|} {\gcd}^2\left(\sum_{i\in I}m_i, d\right)\right).
\end{equation}
\end{theorem}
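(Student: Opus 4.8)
The plan is to derive the formula for $\int \lambda_1$ by summing the Hurwitz–Hodge contributions $\int \lambda_1^e$ computed in Theorem \ref{thm:orbla} over all characters $e = 0, 1, \ldots, d-1$, since $\bE = \bigoplus_{e=0}^{d-1} \bE_e$ as a representation of $\Z/d\Z$, and hence $\lambda_1 = \sum_e \lambda_1^e$. The first step is therefore to re-express the right-hand side of \eqref{eq:onedimorb} in a form that is amenable to summation. The key observation I expect to use is that, after clearing the orbifold normalization, each $\int \lambda_1^e$ is (up to the factor $1/d$) a piecewise-linear function of the fractional parts $\langle e m_i / d\rangle$, and that the two cases in \eqref{eq:onedimorb} together with the two minima assemble into a single closed expression. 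Concretely, I would aim to show that
\begin{equation}\label{eq:persummand}
d \cdot \int_{\Adm{m_1,m_2,m_3,m_4}{d}} \lambda_1^e \;=\; \frac{1}{2}\sum_{i=1}^{4}\left\langle\frac{e m_i}{d}\right\rangle\left(1-\left\langle\frac{e m_i}{d}\right\rangle\right) \;-\; \frac{1}{2}\left\langle\frac{e\sum_i m_i}{d}\right\rangle\left(1-\left\langle\frac{e\sum_i m_i}{d}\right\rangle\right)
\end{equation}
or some variant thereof; verifying such an identity is a finite check over the chamber decomposition of the simplex cut out by $0 \le \langle e m_1/d\rangle \le \cdots \le \langle e m_4/d\rangle < 1$, and over the value of $\sum_i \langle e m_i/d\rangle \in \{0,1,2,3\}$ (the integrality constraint forces $\sum_i e m_i \equiv 0$ only when it must, but in general $\langle e\sum m_i/d\rangle$ tracks the relevant correction term). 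This is the step I expect to be the main obstacle: matching the ``$\min$'' expressions in the two regimes of \eqref{eq:onedimorb} against a quadratic expression in fractional parts requires careful bookkeeping of which of the two arguments of each $\min$ is active in each chamber, and the argument must be robust to the degenerate cases where some $m_i$ is divisible by $d/\gcd$ or where branch points have trivial monodromy.

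Granting \eqref{eq:persummand}, the second step is purely arithmetic: sum over $e = 0$ to $d-1$ and recognize a Gauss-type sum. For a fixed integer $a$, one has the classical identity
\[
\sum_{e=0}^{d-1} \left\langle\frac{ea}{d}\right\rangle\left(1-\left\langle\frac{ea}{d}\right\rangle\right) \;=\; \frac{1}{g}\sum_{f=0}^{g-1}\left\langle\frac{f}{g}\right\rangle\left(1-\left\langle\frac{f}{g}\right\rangle\right)\cdot g \cdot \frac{d}{g} \cdot \frac{g}{d}
\]
— more cleanly, writing $g = \gcd(a,d)$, the map $e \mapsto ea \bmod d$ hits each multiple of $g$ exactly $g$ times as $e$ ranges over $\Z/d\Z$, so the sum equals $g \sum_{j=0}^{d/g - 1} \langle jg/d\rangle(1 - \langle jg/d\rangle) = g\sum_{j=0}^{d/g-1}\frac{j}{d/g}\left(1-\frac{j}{d/g}\right)$, and the elementary evaluation $\sum_{j=0}^{N-1} \frac{j}{N}\left(1 - \frac{j}{N}\right) = \frac{N^2-1}{6N}$ gives $g \cdot \frac{(d/g)^2 - 1}{6(d/g)} = \frac{d^2 - g^2}{6d}$. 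Applying this with $a = m_i$ for each $i$ and with $a = \sum_{i\in[4]} m_i$, and substituting into the sum of \eqref{eq:persummand} over $e$, yields
\[
d\sum_{e=0}^{d-1}\int \lambda_1^e \;=\; \frac{1}{2}\left(\sum_{i=1}^4 \frac{d^2 - \gcd(m_i,d)^2}{6d}\right) - \frac{1}{2}\cdot\frac{d^2 - \gcd(\sum_i m_i, d)^2}{6d} \;=\; \frac{1}{12d}\left(\frac{3d^2}{2} + \frac{1}{2}\!\!\sum_{\emptyset \neq I \subseteq [4],\, |I|\le 1 \text{ or } I = [4]}\!\!(-1)^{|I|}\gcd^2(\cdot)\right),
\]
which after dividing by the extra $d$ and re-indexing must be massaged to match \eqref{eq:onedim}.

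The final step is to reconcile the apparent discrepancy: \eqref{eq:onedim} sums over the \emph{entire} power set $\Pcal([4])$ — including the empty set (contributing $+d^2$), the singletons, the six pairs, the four triples, and the full set — whereas the naive sum from \eqref{eq:persummand} appears to involve only singletons and the top set. The resolution I anticipate is that \eqref{eq:persummand} is not quite the right per-character identity; the correct version symmetrizes over all subsets, using the inclusion–exclusion identity for the monodromy datum (the constraint $\sum_i m_i \equiv 0 \bmod d$ from Definition \ref{def:mondat}, which forces $\gcd(\sum_i m_i, d) = d$ and thus $\langle e\sum m_i/d\rangle = 0$ identically). Once that constraint is imposed, the ``$|I| = 4$'' term collapses to $d^2$ and one checks that the remaining terms — pairs and triples — enter through a more careful analysis of the chamber structure (the position of $\langle em_1/d\rangle + \langle em_4/d\rangle$ relative to $1$ in \eqref{eq:onedimorb} is precisely an inclusion–exclusion statement about which pairs $\{i,j\}$ satisfy $\langle em_i/d\rangle + \langle em_j/d\rangle < 1$). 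I would therefore restructure steps one and two so that the per-character quantity $d\int\lambda_1^e$ is written \emph{ab initio} as $\frac{1}{2}\sum_{I \in \Pcal([4])}(-1)^{|I|}B_2\!\left(\langle e\sum_{i\in I} m_i/d\rangle\right)$ up to a constant, where $B_2(x) = x^2 - x + \tfrac16$ is the second Bernoulli polynomial; this is the natural shape of a Hurwitz–Hodge $\lambda_1$ by the Atiyah–Bott/GRR localization formula, and with it the summation over $e$ and the Gauss-sum evaluation produce \eqref{eq:onedim} directly. The main risk remains the combinatorial verification that Theorem \ref{thm:orbla}'s two-case min-formula is equivalent to this Bernoulli-polynomial expression; I expect to handle this by checking it on each of the finitely many chambers of the ordered simplex, which is tedious but elementary.
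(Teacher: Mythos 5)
Your route --- deducing Theorem \ref{thm:dimonenotorb} from Theorem \ref{thm:orbla} via $\lambda_1=\sum_e\lambda_1^e$ --- is genuinely different from the paper's: the paper proves this theorem by a second, independent Atiyah--Bott localization on $\Adm{\Pro^1|m_1,m_2,m_3,m_4}{d}$, using the auxiliary integral $\int ev_4^\ast c_1(\Ocal_{\Pro^1}(1))\cdot c_2(R^1\Pi_\ast F^\ast\Ocal_{\Pro^1}(-1))=0$ and summing fixed-locus contributions, precisely because (as stated in the introduction) passing from Theorem \ref{thm:orbla} to the general formula \eqref{eq:onedim} requires a nontrivial arithmetic identity that the authors do not prove. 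Your proposal stands or falls on that identity, and as written it has a genuine gap. The per-character formula \eqref{eq:persummand} is false: since $\sum_i m_i\equiv 0 \bmod d$, its second term vanishes identically, and the surviving expression $\tfrac12\sum_i\left\langle\frac{em_i}{d}\right\rangle\left(1-\left\langle\frac{em_i}{d}\right\rangle\right)$ contradicts Theorem \ref{thm:orbla} already in simple cases; for instance $d=3$, $(m_1,m_2,m_3,m_4)=(0,1,1,1)$, $e=1$ gives $\tfrac13$ on the right while the left side is $0$ (the age sum is $1$, so $\bE_1$ has rank $0$), and $d=3$, $(1,1,2,2)$, $e=1$ gives $\tfrac49$ versus the true value $\tfrac13$.

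Your fallback --- $d\int\lambda_1^e = c\sum_{I\in\Pcal([4])}(-1)^{|I|}B_2\!\left(\left\langle \frac{e\sum_{i\in I}m_i}{d}\right\rangle\right)$ --- has the right shape (the constant must be $c=\tfrac14$, not $\tfrac12$: already $d=2$, $m=(1,1,1,1)$, $e=1$ forces this), and your Gauss-sum evaluation, equivalently $\sum_{e=0}^{d-1}B_2\!\left(\left\langle\frac{ea}{d}\right\rangle\right)=\frac{\gcd^2(a,d)}{6d}$, would then indeed produce \eqref{eq:onedim}. But this corrected identity is exactly the crux, and you never establish it: you defer it to a ``finite chamber check'' without specifying the chambers (they are cut out by the integer parts $\lfloor\sum_{i\in I}\langle em_i/d\rangle\rfloor$, on each of which both sides are affine-linear in the ages, so the verification is finite but must cover all cases, including age sums $0,1,3$ where the left side vanishes and degenerate configurations with $m_i=0$ or $\gcd(m_i,d)>1$). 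Since this unproven identity carries the entire weight of the argument --- it is the ``elementary, albeit mysterious'' identity the paper explicitly sidesteps --- the proposal is not yet a proof. To complete it you must either carry out the chamber-by-chamber verification of the $c=\tfrac14$ identity, or abandon the summation route and, as the paper does, run the localization directly with the non-orbifold bundle $R^1\Pi_\ast F^\ast(\Ocal_{\Pro^1}(-1))$, which yields \eqref{eq:onedim} without any such identity.
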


In the case of the full Hodge bundle, the class $\lambda_1$ can be described as a linear combination of boundary strata, $\psi$ classes and the class $\kappa_1$; these are considered the standard generators for the tautological ring of the moduli spaces of curves. The general formula for $\lambda_1$ is a natural generalization of the one-dimensional case.

\begin{theorem} \label{thm:gf} Let $d$ be a positive integer and $m_1, \ldots, m_n$ be integers  with $0\leq m_i <d$. The  class $\lambda_1$ on the space $\Adm{m_1, \ldots, m_n  }{d}$ of cyclic admissible covers of a rational curve with monodromies $m_i$  is equivalent to the following tautological expression:
\begin{equation} \label{eq:gf}
 \lambda_1 = \frac{1}{24 d} \left( \sum_{J\in \Pcal([n])} {\gcd}^2\left(\sum_{j\in J}m_i, d\right) \Delta_J \right),   
\end{equation}
 where
\begin{itemize}
\item for $2\leq |J|\leq n-2$, $\Delta_J$ denotes the boundary divisor generically parameterizing one-nodal curves with the branch points labelled by $J$ sitting on one component, and the branch points in $J^c$ on the other;
\item for $J = \{j\}, [n]\smallsetminus \{j\}$, $\Delta_J:= -\psi_j$;
\item for $J  = \phi, [n]$, $\Delta_J:= \kappa_1$.
\end{itemize}

\end{theorem}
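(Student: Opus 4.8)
The plan is to decompose the Hodge bundle under the $\Z/d\Z$-action, $\bE=\bigoplus_{e=0}^{d-1}\bE_e$ (so $\lambda_1=\sum_{e=0}^{d-1}\lambda_1^e$), compute each Hurwitz--Hodge class $\lambda_1^e$ by the degree-one part of an orbifold Grothendieck--Riemann--Roch computation, and then sum over $e$. Over a rational base there is no moduli of the auxiliary $d$-th root line bundle, so $\Adm{m_1,\dots,m_n}{d}$ is a $\Z/d\Z$-gerbe over $\overline{M}_{0,n}$ and each $\lambda_1^e$ is a tautological divisor class there. Writing $m_J=\sum_{j\in J}m_j$ and $B_2(x)=x^2-x+\tfrac16$, the first step is to prove, by applying Toen's GRR \cite{t:grr} to the pushforward of the twisted line bundle attached to $\bE_e$ on the universal orbicurve (this is the degree-one case of Chiodo's formula for the Chern character of such pushforwards), the uniform expression
\begin{equation*}
\lambda_1^e=\frac14\sum_{J\in\Pcal([n])}B_2\!\left(\left\langle\frac{e\,m_J}{d}\right\rangle\right)\Delta_J,
\end{equation*}
where $\Delta_J$ obeys the same dictionary as in the statement ($\Delta_\emptyset=\Delta_{[n]}=\kappa_1$, $\Delta_{\{j\}}=\Delta_{[n]\smallsetminus\{j\}}=-\psi_j$, and the boundary divisor otherwise); here the factor $\tfrac14=\tfrac12\cdot\tfrac12$ bundles the GRR node factor together with the overcounting produced by letting $J$ and $J^c$ appear separately in $\Pcal([n])$, using $B_2(\langle -x\rangle)=B_2(\langle x\rangle)$. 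A useful check on the normalization: for $e=0$ every $B_2$ equals $\tfrac16$, so the right-hand side is $\tfrac1{12}\big(\kappa_1-\sum_j\psi_j+\sum_{\{J,J^c\}}\delta_{0,J}\big)$, which vanishes by the standard tautological relation on $\overline{M}_{0,n}$, as it must since $\bE_0$ is the Hodge bundle of a genus-zero family.

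Granting this, the theorem reduces to the elementary identity
\begin{equation*}
\sum_{e=0}^{d-1}B_2\!\left(\left\langle\frac{ea}{d}\right\rangle\right)=\frac{{\gcd}^2(a,d)}{6d}\qquad(a\in\Z,\ \gcd(0,d):=d),
\end{equation*}
which follows from Raabe's multiplication formula for Bernoulli polynomials: with $g=\gcd(a,d)$ and $d'=d/g$, the fractional part $\langle ea/d\rangle$ runs $g$ times through $\{0,\tfrac1{d'},\dots,\tfrac{d'-1}{d'}\}$, and $\sum_{k=0}^{d'-1}B_2(k/d')=\tfrac1{6d'}$. Substituting into $\lambda_1=\sum_e\lambda_1^e$ turns the coefficient of $\Delta_J$ into $\tfrac14\cdot\tfrac1{6d}{\gcd}^2(m_J,d)=\tfrac1{24d}{\gcd}^2(m_J,d)$, which is precisely \eqref{eq:gf}. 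The monodromy-datum hypotheses (Definition~\ref{def:mondat}) enter only to guarantee $\gcd(m_{[n]},d)=\gcd(0,d)=d$, so that $J=\emptyset$ and $J=[n]$ both feed $\kappa_1$ with coefficient $\tfrac{d}{12}$, and $\gcd(m_{J^c},d)=\gcd(m_J,d)$, so that each boundary divisor receives the stated symmetric coefficient.

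The main obstacle is the first step: identifying $\bE_e$ uniformly across the boundary of $\Adm{m_1,\dots,m_n}{d}$ with the derived pushforward of the correct twisted line bundle, and extracting its degree-one Chern class with the right ``ages''. The branch points contribute $\psi$-terms weighted by $B_2(\langle em_j/d\rangle)$ --- note that these are the ages of the orbifold branch points on the base, not of the ramification points on the cover, which differ by the ramification indices --- and separating nodes contribute boundary terms weighted by $B_2(\langle em_J/d\rangle)$; the delicate cases are the untwisted (``Ramond'') nodes, where $d\mid e\,m_J$ and the rank of $\bE_e$ jumps, so that the naive pushforward description has to be corrected. A GRR-free cross-check, and an alternative route, is to verify \eqref{eq:gf} as an identity of divisor classes by pairing both sides with the F-curves, which span $N_1(\overline{M}_{0,n})$: the intersection numbers of $\kappa_1$, the $\psi_j$ and the boundary divisors with an F-curve are classical, while restricting the universal cyclic admissible cover to an F-curve produces a one-parameter family of covers of a four-pointed rational curve with combined monodromies $m_{J_1},\dots,m_{J_4}$, whose $\lambda_1$-degree is computed by Theorem~\ref{thm:dimonenotorb}; matching the two sides curve by curve recovers \eqref{eq:gf}.
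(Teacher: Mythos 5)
Your proposal is correct in outline, and its primary route is genuinely different from the paper's: the paper does not touch Grothendieck--Riemann--Roch at all, but proves \eqref{eq:gf} by exactly the argument you relegate to a ``cross-check'' at the end --- since boundary (F-)curves generate $A_1(\Adm{m_1,\ldots,m_n}{d})$, one pairs both sides with every $C_{(X,Y,Z,W)}$, evaluates the left side by Theorem \ref{thm:dimonenotorb} and the right side by the classical intersections of $\kappa_1$, $\psi_j$ and $\Delta_J$ with F-curves (with the gerbe factor $1/d$), and matches term by term. So your fallback is literally the paper's proof, and it is complete once Theorem \ref{thm:dimonenotorb} is granted. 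Your main route, via the degree-one part of Chiodo/Toen GRR giving $\lambda_1^e=\tfrac14\sum_J B_2\bigl(\langle em_J/d\rangle\bigr)\Delta_J$ followed by Raabe's identity $\sum_{e=0}^{d-1}B_2(\langle ea/d\rangle)=\gcd^2(a,d)/(6d)$, is sound: the summation identity and the $e=0$ normalization check are right, and your $B_2$-formula is consistent with Theorem \ref{thm:orbla} on all four-pointed spaces (one can verify the degrees agree case by case), so the coefficients do assemble to $\tfrac{1}{24d}\gcd^2(m_J,d)$. What this buys is strictly more than the theorem: a uniform graph formula for each $\lambda_1^e$, which bears directly on the Question posed in the paper (the authors state they could not produce such a formula, and deliberately avoid GRR as obscuring the combinatorial structure). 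What it costs is precisely the bookkeeping you flag yourself: pinning down the node term $\tfrac{d}{2}B_2(q/d)$ of Chiodo's formula against the paper's normalization of $\Delta_J$ (gerbe factor, twisted-node automorphisms, and the untwisted/Ramond nodes where $d\mid em_J$ and the rank of $\bE_e$ jumps). As written that step is a plausible but unproven ingredient; since your F-curve route closes the theorem without it, the proposal as a whole stands, with the GRR route best viewed as a stronger statement still requiring the node analysis to be carried out.
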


Theorems \ref{thm:orbla} and \ref{thm:dimonenotorb} are proved using Atyiah-Bott localization, following a strategy introduced for ordinary Hodge integrals in \cite{fp:hiagwt} and imported to the setting of admissible covers in \cite{r:adm,r:tqft}. The idea is to set-up a vanishing auxiliary integral on a moduli spaces of  admissible covers of a parameterized $\Pro^1$: these are moduli spaces that admit a torus action, and hence the vanishing integral can be evaluated by restricting it to fixed loci for the torus action, giving rise to a relation among Hurwitz-Hodge integrals. By choosing the auxiliary integral carefully, one can arrange for the principal part of the relation to contain the degree of $\lambda_1$ (or $\lambda_1^e$), and for all other terms to be either zero dimensional moduli spaces, or functions  of the degree of a $\psi$ class, which is well-known.

Since the full Hodge bundle is the direct sum of its subrepresntations, we have the relation 
\begin{equation}\label{eq:laorbla}
 \lambda_1 =\sum_{e=1}^{d-1} \lambda_1^e,   
\end{equation}

hence in principle one may deduce Theorem \ref{thm:dimonenotorb} as a corollary of Theorem \ref{thm:orbla}. While it is certainly straightforward to do so for any individual case, to obtain a general result we found it in the end more convenient  to approach the non-orbifold computation independently. One may therefore use \eqref{eq:laorbla} and the two theorems to deduce an elementary, albeit mysterious to us, arithmetic identity. 

The formula for the degree of $\lambda_1$ from Theorem \ref{thm:dimonenotorb} naturally generalizes to higher dimensional moduli spaces to produce a {\it graph formula}: a representation of $\lambda_1$ in terms of combinatorially decorated strata classes, in a fashion similar to \cite{jppz:dr}. Once one is able to guess a graph formula, one may prove it simply by computing intersection numbers with all boundary curves, using the fact that boundary curves generate $A_1(\Adm{m_1, \ldots, m_n}{d})$.
The study of a graph formula description of $\lambda_1$ was initiated in \cite{pete,champs}, which independently computed the hyperelliptic case. The second and third authors studied the $d=3$  case in \cite{cos:deg3}.
Theorem \ref{thm:gf} concludes this analysis for all spaces of cyclic covers of $\Pro^1$.
While we are currently unable to produce a graph formula for the orbifold classes $\lambda_1^e$, our observations led us to the following question.
 
\begin{question}
With notation as in Theorem \ref{thm:gf}, the following is a graph formula for the classes $\lambda_1^e$ in the $4$-pointed case: 
$$
\lambda^e_1 = \frac{1}{2} \left( \sum_{I\in \Pcal([4])} \min \left\{0,1 - \sum_{i\in I} \left\langle\frac{em_i}{d}\right\rangle \right\}\Delta_I \right).
$$
Is there a natural generalization that yields a graph formula in the general case?
\end{question}
One of the goals of this manuscript is to  communicate both to the readers versed in a more classical algebraic geometric language, as well as with the readers steeped in orbifold technology. We feel that there is potential for fruitful interactions between the two communities, and dedicate a good part of the background section in recalling (albeit briefly) the main connections between the two languages.

\subsection{Acknowledgments} We thank Rachel Pries and John Voight for interesting conversations related to this project. The first author acknowledges  support from the Simons collaboration grant 420720 and NSF grant DMS 2100962. The second and third authors received support from the Mathematics Dept. of Colorado State University.

\section{Background}
In this section we collect some background needed for the computations in the later sections. While there is no pretense to make this work self-contained, we provide basic information for translating between the languages of covers and orbifold maps, and suggest references for readers interested in more details. 
\subsection{Orbifolds}

Orbifolds (or stacks in the algebraic category) are geometric objects generalizing the notion of orbit spaces to the case of non-free group actions. We refer the reader to  \cite{ruanadem},\cite{Fantechi} for a comprehensive introduction, and limit ourselves to recalling the aspects of the theory that are relevant to this manuscript.

\subsubsection*{Global quotient orbifolds.}
Given a space $X$ and a group $G$ acting on it, one denotes by $[X/G]$ the orbifold quotient of $X$ by $G$ (see \cite{fg:ocgq}). The most useful way to define this concept is through its functor of points; in simple terms, this means that one gives geometric structure to $[X/G]$ by describing the functions to it.

\begin{defi} \label{def:go}
A function $f: B\to [X/G]$ consists of a 
pair:
\begin{itemize}
    \item $\pi_f:E\to B$, a principal $G$-bundle over $B$;
    \item $F: E\to X$, a $G$-equivariant map.
\end{itemize}  
\end{defi}
Definition \ref{def:go} produces a fiber diagram
\begin{equation}
    \xymatrix{
    E \ar[d]_{\pi_f}\ar[r]^F& X\ar[d]^\pi \\
    B \ar[r]^f& [X/G]
    }
\end{equation}
analogous to the universal property of orbit spaces in the case of a free action.

One may still think of geometric points of $[X/G]$ as orbits $[x]$ of points of $X$ under the $G$ action; each point $[x]$ comes with the additional information of an {\it isotropy group} $G_x$, isomorphic to the stabylizer of any point $x\in [x]$.

In the extreme case when $X = pt.$, we denote the quotient orbifold $[pt./G]$ by $BG$, and call it the  classifying space for principal $G$-bundles (as the datum of the map $F$ becomes in this case trival). This orbifold consists of a point with isotropy group $G$.
A {\it line bundle} on $BG$ consists of a one dimensional representation of $G$.

\begin{defi}
For $d$ a non-negative integer, and $0\leq e < d-1$ we denote by $\Ocal_e$ the line bundle on $B(\Z/d\Z)$ corresponding to the representation
$$
[1]\cdot z = \exp\left({e\frac{2\pi i}{d}}\right)z.
$$
For $j\in \Z$, we denote by $L_j$ the line bundle on $B\C^\ast$ corresponding to the representation
$$
\alpha \cdot z = \alpha^jz.
$$
\end{defi}
\begin{rmk} The following observations will be useful later:
\begin{itemize}
    \item we denote by $\Ocal_e$ and $L_j$ also line bundles that are pulled back from $BG$ to other spaces. These consist of trivial line bundles with a $G$-action;
    \item in the case of $B\C^\ast$, we  need to work with $\Q$-divisors;  the index $j$ in $L_j$ will then be a rational number.
\end{itemize}
\end{rmk}

\subsubsection*{Twisted curves.}
Global quotient orbifolds are the local models for the construction of more general orbifolds. A treatment of one-dimensional orbifolds and orbifold line bundles is in \cite{j:egwsc}. A {\it twisted curve} $\Ccal$ is obtained from a Riemann surface $C$ by replacing a finite number of disjoint open discs of $C$ with (open sets of) global quotient orbifolds $[\C/ (\Z/d\Z)]$; the curve $\Ccal$ and its so called coarse moduli space $C$ have the same geometric points, but the twisted curve has a finite number of points, called {\it twisted points}, with non-trivial, cyclic isotropy groups.

A line bundle $\Lcal$ on a twisted curve $\Ccal$ contains the information of a representation of the isotropy group $G_x$ over every twisted point, describing a lift of the group action to the fiber of the line bundle. If $G_x = \Z/d\Z$ and $[1]\cdot w = \mbox{e}^{k\frac{2\pi i}{d}}w$, then the rational number $k/d$ is called the {\it age} of $\Lcal$ at $x$. The age of a line bundle at the twisted points contributes to the orbifold version of the Riemann-Roch theorem.

\begin{theorem}[Orbifold Riemann-Roch,  \cite{agv:gwtodms}, 7.2.1] \label{thm:orroch}
Let $\Ccal$ be a twisted curve, and $\Lcal$ a line bundle on it; then:
$$
h^0(\Ccal,\Lcal)- h^1(\Ccal,\Lcal) = \deg (\Lcal) +1 -g_\Ccal -\sum_{x\in \Ccal} age_x(\Lcal).
$$
\end{theorem}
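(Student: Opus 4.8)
The plan is to deduce the orbifold Riemann--Roch formula from the classical Riemann--Roch theorem on the coarse moduli curve $C$ by pushing $\Lcal$ forward along the coarse space morphism $\pi\colon\Ccal\to C$. The key structural input is that, since we work in characteristic zero and the fibres of $\pi$ are either ordinary points or classifying stacks $B(\Z/d\Z)$, the functor $\pi_\ast$ is exact: taking $(\Z/d\Z)$-invariants is exact and the higher cohomology of a representation over $B(\Z/d\Z)$ vanishes, so $R^{i}\pi_\ast\Lcal = 0$ for $i>0$. The Leray spectral sequence then collapses and yields $h^{i}(\Ccal,\Lcal)=h^{i}(C,\pi_\ast\Lcal)$ for all $i$, and in particular $\chi(\Ccal,\Lcal)=\chi(C,\pi_\ast\Lcal)$. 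Applying this with $\Lcal=\Ocal_\Ccal$ also gives $g_\Ccal = h^{1}(\Ccal,\Ocal_\Ccal)=h^{1}(C,\Ocal_C)=g_C$, so the two genera agree.

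Ordinary Riemann--Roch applied to the line bundle $\pi_\ast\Lcal$ on $C$ now gives
$$
h^{0}(\Ccal,\Lcal)-h^{1}(\Ccal,\Lcal)=\chi(C,\pi_\ast\Lcal)=\deg_C(\pi_\ast\Lcal)+1-g_C,
$$
so, in view of $g_\Ccal=g_C$, the entire theorem reduces to the local degree identity $\deg_C(\pi_\ast\Lcal)=\deg(\Lcal)-\sum_{x\in\Ccal}age_x(\Lcal)$, where the discrepancy is concentrated at the twisted points.

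To establish this identity I would pass to the \'etale-local model of a twisted point, $\Ccal\cong[\Spec\C[u]/(\Z/d\Z)]$ with $[1]$ acting by $u\mapsto\zeta u$, $\zeta=\exp(2\pi i/d)$, whose coarse space is $\Spec\C[t]$ with $t=u^{d}$. Trivialising $\Lcal$ by a frame $s$ whose fibre transforms with weight $k$, $0\le k<d$, so that $age_x(\Lcal)=k/d$, the invariant --- hence descending --- sections are exactly the $\C[t]$-multiples of $u^{k}s$; this identifies $\pi_\ast\Lcal$ near the image of $x$ as the line bundle generated by $u^{k}s$. I would then read off both degrees from a single invariant rational section $\rho$ of $\Lcal$: writing $\rho=h(t)\,u^{k}s$ locally, its order as a section of $\pi_\ast\Lcal$ is $\mathrm{ord}_t\,h$, whereas its orbifold order in the uniformizer $u$ equals $d\,\mathrm{ord}_t\,h+k$, which the weight $1/d$ attached to the stacky point converts into the contribution $\mathrm{ord}_t\,h+k/d$ to $\deg(\Lcal)$. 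The two local contributions thus differ by exactly $k/d=age_x(\Lcal)$, and since they coincide at every untwisted point, summing over all points gives $\deg(\Lcal)-\deg_C(\pi_\ast\Lcal)=\sum_{x}age_x(\Lcal)$, completing the reduction.

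The step I expect to demand the most care is this last local comparison, precisely because it is convention-bound: one must fix compatible conventions for the $(\Z/d\Z)$-action on the fibre of $\Lcal$, for the resulting residue $k\in\{0,\dots,d-1\}$, and for the normalization of the orbifold degree (a rational number weighted by $1/|G_x|$ at each stacky point), in such a way that the age of the excerpt is reproduced as $k/d$ and the pushforward truncates this fractional part downward rather than shifting it to $1-k/d$. Getting this sign right --- equivalently, verifying that the regular generator of $\pi_\ast\Lcal$ is $u^{k}s$ and not $u^{d-k}s$ --- is what produces the correct $-\sum_x age_x(\Lcal)$ correction, and is the one place where the equivariance bookkeeping cannot be glossed over.
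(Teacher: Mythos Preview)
The paper does not prove this statement: Theorem~\ref{thm:orroch} is quoted as background from \cite{agv:gwtodms}, Section~7.2.1, with no argument supplied. There is therefore no proof in the paper to compare your proposal against.

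That said, your sketch is the standard and correct route to the formula: exactness of $\pi_\ast$ along the coarse-space map (taking invariants under a finite group in characteristic zero is exact), Leray to identify $\chi(\Ccal,\Lcal)=\chi(C,\pi_\ast\Lcal)$, ordinary Riemann--Roch on $C$, and then the local degree comparison at each twisted point. Your local computation is also right with the paper's conventions: if $[1]$ acts on the fibre coordinate by $\zeta^{k}$ and on the uniformizer by $\zeta$, then a section $f(u)\,s$ descends precisely when $f(\zeta u)=\zeta^{k}f(u)$, so the descending sections are the $\C[t]$-multiples of $u^{k}s$, and this yields the degree drop $k/d=age_x(\Lcal)$ exactly as you claim. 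The caution you flag at the end is well placed, but with these conventions fixed it resolves in favour of $u^{k}s$ rather than $u^{d-k}s$, so there is no hidden sign issue.
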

\subsubsection*{Maps of orbifolds}
A map of orbifolds $f:\Xcal \to \Ycal$ contains additional information with respect to the function on the coarse spaces $f:X\to Y$. For every point $x\in \Xcal$, $f$ determines a group homomorphism $\phi_{f,x}: G_x \to G_{f(x)}$.
The map $f$ is called {\it representable} if all the group homomorphisms $\phi_{f,x}$ are injective.

\subsection{Cyclic admissible covers and twisted stable maps}

Admissible covers were introduced in \cite{hm:kd} to compactify the Hurwitz spaces. In \cite{acv:ac}, the authors show that connected components of the smooth stack of twisted stable maps to $BS_d$  realize the normalization of  spaces of admissible covers. We now introduce these spaces in the case where the target is $B(\Z/d\Z)$, and adopt the convention of calling {\it admissible covers} the smooth stack of twisted stable maps.

Let $\Ccal$ be a twisted curve whose coarse space  $C\cong \Pro^1$ is smooth. By Definition \ref{def:go}, a map $f: \Ccal \to B \Z/d\Z$ consists of a principal $\Z/d\Z$-bundle $\Ecal\to \Ccal$. Passing to the corresponding coarse spaces one obtains a cyclic cover $\pi_f: E\to C$, that ramifies only over the (image of the) twisted points of $\Ccal$. To be precise, if
$
\phi_{x,f}([1]) = m\in \Z/d\Z, 
$
then $\pi_f^{-1}(x)$ consists of $\gcd(m,d)$ points of ramification order $d/\gcd(m,d)$;
further, if one chooses a generic point $x_0\in C$ as a base point and a labelling of its $d$ inverse images that is compatible with the cyclic group action on $E$, then the image of a small loop around $x$ in the monodromy representation associated to the cover $\pi_f$ is precisely $m$. Hence we say that $x$ is a point of monodromy $m$ for $\pi_f$.

\begin{defi} \label{def:mondat}
Let $d$ be a positive integer. A set of integers $m_1, \ldots, m_n$ with $0\leq m_i <d$ are called {\bf a monodromy datum} for degree $d$ cyclic admissible covers of rational curves if
$
   \sum_{i=1}^n m_i = 0  \mod d
$
and
$
\gcd(m_1, \ldots, m_n,d) = 1.
$
\end{defi}
\begin{rmk}
The first condition in Definition \ref{def:mondat} guarantees that admissible covers exist; the second one requires them to be connected.
\end{rmk}
\begin{defi}
Given a monodromy datum $m_1, \ldots, m_n$, the moduli space of admissible covers $\Adm{m_1, \ldots, m_n}{d}$ parameterizes representable maps $\Ccal\to B(\Z/d\Z)$ such that:
\begin{itemize}
    \item $C$ is a rational, nodal curve;
    \item $\Ccal $ has exactly $n$ twisted  points labeled $x_1, \ldots, x_n$ in the smooth locus of $C$, of monodromies $m_1, \ldots, m_n$;
    \item the nodes of $\Ccal$ may be twisted; in that case, the  two shadows of a node in the normalization of $\Ccal$ have opposite monodromies.
\end{itemize}
\end{defi}

As before, a map $\Ccal\to B(\Z/d\Z)$ corresponds to a cyclic cover $E\to C$ of the rational nodal curve $C$. The nodes of $E$ are precisely the inverse images of nodes of $C$, and the ramification orders of pairs of shadows of any node must match.  

Spaces of admissible covers have universal morphisms denoted as in the following diagram:
\begin{equation} \label{eq:acuc}
    \xymatrix{\Ucal_E\ar[d] \ar[rr]^F \ar@/_5pc/[ddr]_\Pi& & pt.\ar[d]\\
    \Ucal_C  & \ar[l] \Ucal_\Ccal \ar[r]^f \ar[d]_\pi & B(\Z/d\Z)\\
     & \Adm{m_1, \ldots, m_n}{d}& }
\end{equation}

By the Riemann-Hurwitz formula, the genus of the cover curve $E$ is
\begin{equation}
    g = 1+\frac{(n-2)d-\sum_{i=1}^n\gcd(m_i,d)}{2}.
\end{equation}

There are two natural morphisms from the space of admissible covers. The {\it source} morphism
\begin{equation}
    s: \Adm{m_1, \ldots, m_n}{d}\to \overline{M}_{g}
\end{equation}
remebers the cover curve $E$. The {\it branch} morphism
\begin{equation}
    br: \Adm{m_1, \ldots, m_n}{d}\to \overline{M}_{0,n}
\end{equation}
records the base curve $C$ together with the images of the $n$-twisted points of $\Ccal$. The branch morphism is a bijection on geometric points, but every point of $\Adm{m_1, \ldots, m_n}{d}$ has an order $d$ cyclic isotropy group, and hence
$
\deg (br)  = \frac{1}{d}$.

\subsubsection*{Boundary stratification}
Boundary strata for the space $\Adm{m_1, \ldots, m_n}{d}$ are in canonical bijection with boundary strata of $\overline{M}_{0,n}$, and they can therefore be indexed by the dual graphs of the base curves.

Any subset $I\in \Pcal([n])$  with $ 2\leq |I|\leq n-2$ identifies a boundary divisor $\Delta_I$, whose set of points is isomorphic to the product
\begin{equation}
  \Adm{\{m_i\}_{i\in I}, \left[-\sum_{i\in I}m_i\right]_d}{d}\times \Adm{\{m_i\}_{i\in I^c}, \left[\sum_{i\in I}m_i\right]_d}{d};   \end{equation}
as a stack, however $\Delta_I$ is isomorphic to a fiber product over $B \Z/d\Z$ of the two factors above. This causes, when integrating along $\Delta_I$, a  factor of $d$ often referred to as the gluing factor (see \cite[Section 1.6]{cc:c3z3} for a discussion). To remember this, we abuse notation and write
\begin{equation}
 \Delta_I\cong d\cdot \Adm{\{m_i\}_{i\in I}, \left[-\sum_{i\in I}m_i\right]_d}{d}\times \Adm{\{m_i\}_{i\in I^c}, \left[\sum_{i\in I}m_i\right]_d}{d}.   \end{equation}

We are especially interested in one-dimensional boundary strata, or boundary curves, in $\Adm{m_1, \ldots, m_n}{d}$. Their dual graphs are trees that have a unique vertex $v$ of valence $4$, and all other vertices trivalent. Removing $v$ the set of indices is partitioned into four sets $X,Y,Z,W$. The rational equivalence class of a boundary curve depends only on such partition, and hence we denote a boundary curve class by $C_{(X,Y,Z,W)}$.
We observe that, keeping in account gluing factors as well as automorphism factors coming from the zero dimensional moduli spaces in the product expression of a boundary curve, in the end one has:
\begin{equation}
   C_{(X,Y,Z,W)}\cong \Adm{\left[\sum_{x\in X} m_x\right]_d,\left[\sum_{y\in Y} m_y\right]_d,\left[\sum_{z\in Z} m_z\right]_d,\left[\sum_{w\in W} m_w\right]_d  }{d}.
\end{equation}

\subsubsection*{Parameterized admissible covers}
Given a monodromy datum, we denote by 
$$
\Adm{\Pro^1| m_1, \ldots, m_n}{d}
$$
the space of {\it admissible covers of a parameterized $\Pro^1$} (\cite{r:tqft}); using orbifold language these are twisted stable maps of degree $(1,0)$ to $\Pro^1\times B(\Z/d\Z)$. In terms of the geometry of the covers $E\to C$, this means that one component  $\Pro^1\subseteq C$ is chosen, and for two covers to be isomorphic, the isomorphism of the base curves must restrict to the identity of the special component $\Pro^1$.
A universal diagram analogous to \eqref{eq:acuc} holds for spaces of parameterized admissible covers, where one takes the cartesian product with $\Pro^1$ of every space in the rightmost column.

Given a parameterized admissible cover
$f: \Ccal \to \Pro^1\times B(\Z/d\Z)$ and a bundle $\Lcal = \Ocal_{\Pro^1}(n)\otimes \Ocal_e$, the cohomology groups of $f^\ast\Lcal$ may be described in terms of the geometry of the cover $E\to C\to \Pro^1$. Unraveling the appropriate orbifold definitions (as  in \cite{bgp:crc}), one obtains
\begin{equation}\label{eq:unravel}
    H^i(\Ccal, f^\ast\Lcal) = \left(H^i(E, F^\ast\Ocal_{\Pro^1}(n) ) \right)_{d-e}, 
\end{equation}
where $F$ is the composition of the two maps above, and the subscript $(d-e)$ denotes the subrepresentation of the cohomology group corresponding to the character $(d-e)$.


\subsection{Tautological classes}

Intuitively, tautological classes on a family of moduli spaces are elements of the Chow (or cohomology) ring that are constructed using the intrinsic  geometry of the objects parameterized (e.g. the structure sheaf or the dualizing sheaf), via a series of operations that involve push-forwards and pull-backs via tautological morphisms. We refer the reader to \cite{v:mscgw} for a proper introduction to the subject, and here focus on introducing the objects and properties that we need.

\subsubsection*{Chern classes of bundles} \label{sec:chern}
See \cite{f:it} for a less skeletal introduction to the subject. Given a vector bundle $E\to X$ of rank $r$, the {\it total Chern class} of $E$ is a Chow class
\begin{equation}
    c(E):= 1+ c_1(E)+\ldots+c_r(E),
\end{equation}
where $c_i(E)\in A^i(X)$ is called the $i$-th Chern class.
Perhaps the single most important formal property in this theory is that the total Chern class is multiplicative with respect to extensions, in the sense that given a sequence $0\to F\to E\to Q\to 0$, one has $c(E) = c(F)c(Q)$; this readily implies that the first Chern class is additive, i.e. $c_1(E) = c_1(F)+c_1(Q)$.

Given a bundle $E$ of rank $r$, its {\it Chern roots} $\alpha_1, \ldots, \alpha_r$ are graded symbols of degree one with the defining property that the $i$-th Chern class of $E$ is the $i$-th elementary symmetric function in the Chern roots.

Chern roots allow to treat arbitrary bundles as if they split as the direct sum of line bundles, and are hence useful tools to compute Chern classes. For example, if $E$ is as above and $L$ a line bundle, then the Chern roots of the bundle $E\otimes L$ are $\alpha_i+c_1(L)$ and this fact allows to readily  compute the Chern classes of the tensor product in terms of the Chern classes of the factors. As an application, we show a computation needed in Section \ref{sec:non-orb}.

\begin{lemma}\label{lem:chern}
$$
c_1(E^{\oplus n}) = n c_1(E)
$$
$$
c_2(E^{\oplus n}) = n c_2(E) + {{n}\choose{2}} c_1^2(E).
$$
\end{lemma}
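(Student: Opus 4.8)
The plan is to use Chern roots to reduce both identities to elementary symmetric function computations. Since $E^{\oplus n}$ has Chern roots consisting of the $n$-fold repetition of the Chern roots $\alpha_1, \ldots, \alpha_r$ of $E$, I would compute the first two elementary symmetric functions of this enlarged collection and re-express them in terms of the elementary symmetric functions of the $\alpha_i$'s alone, which are by definition $c_1(E)$ and $c_2(E)$.

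For the first identity, the cleanest route is to invoke additivity of $c_1$ under direct sums directly: applying the relation $c_1(F\oplus Q) = c_1(F) + c_1(Q)$ inductively gives $c_1(E^{\oplus n}) = n c_1(E)$ immediately, with no need for Chern roots at all. For the second identity, I would write $c(E^{\oplus n}) = c(E)^n$ by multiplicativity of the total Chern class under direct sums, and then extract the degree-two part. Expanding $(1 + c_1(E) + c_2(E) + \cdots)^n$ and collecting terms of codimension $2$, one picks up the term $n c_2(E)$ from choosing the $c_2(E)$ factor in one of the $n$ copies, and the term $\binom{n}{2} c_1(E)^2$ from choosing a $c_1(E)$ factor in each of two distinct copies. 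This yields $c_2(E^{\oplus n}) = n c_2(E) + \binom{n}{2} c_1^2(E)$.

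Alternatively, and perhaps more transparently, one can argue with Chern roots: the collection of Chern roots of $E^{\oplus n}$ is $\{\alpha_i : 1 \le i \le r\}$ each taken with multiplicity $n$. Then $c_1(E^{\oplus n}) = n \sum_i \alpha_i = n c_1(E)$, and $c_2(E^{\oplus n})$ is the second elementary symmetric function of this multiset, which splits as $\binom{n}{2}\sum_i \alpha_i^2$ (pairs of roots coming from distinct copies with the same index $i$) plus $n^2 \sum_{i < j} \alpha_i \alpha_j$ (pairs with distinct indices) plus $n \cdot 0$ — wait, more carefully, pairs from the same copy with distinct indices contribute $n \sum_{i<j}\alpha_i\alpha_j$ and pairs from distinct copies contribute $\binom{n}{2}\left(\sum_i \alpha_i^2 + 2\sum_{i<j}\alpha_i\alpha_j\right) = \binom{n}{2}(\sum_i \alpha_i)^2$. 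Using $\sum_i \alpha_i^2 = c_1(E)^2 - 2c_2(E)$ and $\sum_{i<j}\alpha_i\alpha_j = c_2(E)$, one assembles $n c_2(E) + \binom{n}{2} c_1(E)^2$ after the arithmetic.

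There is no real obstacle here; this is a routine bookkeeping lemma. The only thing to be careful about is the combinatorics of counting which pairs of Chern roots come from the same summand versus different summands when expanding the second elementary symmetric function, so I would favor the generating-function argument via $c(E^{\oplus n}) = c(E)^n$ and extraction of the codimension-two component, as it makes the coefficients $n$ and $\binom{n}{2}$ manifest without any risk of miscounting.
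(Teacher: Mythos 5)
Your proposal is correct and matches the paper's argument, which simply invokes the multiplicativity of the total Chern class, $c(E^{\oplus n}) = c(E)^n$, and reads off the degree-one and degree-two parts exactly as in your preferred generating-function route. The Chern-root bookkeeping you sketch as an alternative is also fine (after your self-correction), but it is not needed.
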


\begin{proof}
These statements follow immediately from the multiplicativity of total Chern classes.
\end{proof}

\subsubsection*{Hodge bundles.} 
The main object of study of this work is the first Chern class of the Hodge bundle and its orbifold variants on spaces of admissible covers, which we now introduce.

\begin{defi} Consider a space of admissible covers $\Adm{m_1, \ldots, m_n}{d}$ and the universal morphisms from diagram \eqref{eq:acuc}. We define:
$$
\bE:= (R^1\Pi_\ast F^\ast(\Ocal))^\vee.
$$
\end{defi}
One observes that $\bE = s^\ast(\bE)$, i.e. the Hodge bundle on the space of admissible covers is the pull-back of the homonymous bundle on the moduli space of curves via the source morphism. Its fibers over a general point $\Ccal \to B \Z/d\Z$ corresponding to a smooth cover $E\to C$ may be identified with the vector space of holomorphic one-forms on $E$. It follows that the rank of $\bE$ is equal to the genus of $E$.

\begin{defi} Consider a space of admissible covers $\Adm{m_1, \ldots, m_n}{d}$ and the universal morphisms from diagram \eqref{eq:acuc}. We define:
$$
\bE_e:= (R^1\pi_\ast f^\ast(\Ocal_{e}))^\vee.
$$
\end{defi}

The rank of the bundle $\bE_e$ is computed using the orbifold Riemann-Roch theorem:
\begin{equation}\label{eq:rankhodgeorb}
    \rk(\bE_e) = \rk ((\bE^\vee)_{d-e}) = \rk(R^1\pi_\ast f^\ast\Ocal_e)= h^1(C, f^\ast(\Ocal_e)) = -1 + \sum_{i=1}^n age_{x_i}(f^\ast(\Ocal_e)) = -1 + \sum_{i=1}^n \left\langle \frac{em_i}{d}\right\rangle
\end{equation}

The notation $\bE_e$ follows from the interpretation of its fibers in terms of the geometry of the covers $E\to C$. Applying \eqref{eq:unravel}, one sees that the fiber of $\bE_e$ over a general moduli point corresponding to a cover  $E\to C$ equals the $e$-subrepresentation of the space of holomorphic one forms on $E$.

It follows  that \begin{equation}
\bE = \bigoplus_{e=0}^{d-1} \bE_e.    
\end{equation}

We need to work with subrepresentations of the dual of the Hodge bundle as well. The natural way to induce an action on a dual space implies:
\begin{equation} \label{eq:dual}
    (\bE^\vee)_{e} \cong (\bE_{d-e})^\vee.
\end{equation}

We recall the $G$-Mumford relation, introduced in \cite{bgp:crc}:
\begin{equation} \label{eq:GMumf}
    c(\bE_e \oplus  (\bE^\vee)_{e}) = 1.
\end{equation}

\begin{defi}
We define:
$$
\lambda_1:= c_1(\bE),
$$
$$
\lambda^e_1:= c_1(\bE_e).
$$
\end{defi}

\begin{lemma}\label{lem:sym}
For any choice of positive integers $d,e$, with $0\leq e<d$, and monodromy datum $m_1, \ldots, m_n$,
\begin{equation} 
    \lambda_1^e = \lambda_1^{d-e}.
\end{equation}
\end{lemma}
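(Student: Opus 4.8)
The plan is to deduce the identity $\lambda_1^e = \lambda_1^{d-e}$ directly from the $G$-Mumford relation \eqref{eq:GMumf} together with the duality identity \eqref{eq:dual}. Combining these two, we have $c(\bE_e \oplus (\bE^\vee)_e) = 1$, and $(\bE^\vee)_e \cong (\bE_{d-e})^\vee$. Taking first Chern classes and using that $c_1$ is additive over direct sums (Lemma \ref{lem:chern}, or the basic multiplicativity of total Chern classes recalled in Section \ref{sec:chern}), the degree-one part of the Mumford relation reads
\begin{equation}
c_1(\bE_e) + c_1\big((\bE_{d-e})^\vee\big) = 0.
\end{equation}
Since $c_1(V^\vee) = -c_1(V)$ for any vector bundle $V$, this becomes $c_1(\bE_e) - c_1(\bE_{d-e}) = 0$, i.e. $\lambda_1^e = \lambda_1^{d-e}$, which is exactly the claim.

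First I would record the degree-one consequence of \eqref{eq:GMumf}: writing $c(\bE_e) = 1 + \lambda_1^e + \cdots$ and $c((\bE^\vee)_e) = 1 + c_1((\bE^\vee)_e) + \cdots$, multiplicativity gives that the codimension-one term of the product vanishes, hence $\lambda_1^e + c_1((\bE^\vee)_e) = 0$. Next I would substitute the isomorphism \eqref{eq:dual}, $(\bE^\vee)_e \cong (\bE_{d-e})^\vee$, so that $c_1((\bE^\vee)_e) = c_1((\bE_{d-e})^\vee) = -c_1(\bE_{d-e}) = -\lambda_1^{d-e}$. Plugging this in yields $\lambda_1^e - \lambda_1^{d-e} = 0$, as desired. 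One small point to mention is the boundary case: when $d \mid e$ (so $e \equiv 0$) the statement is vacuous since $e = d - e$ is not in range, or more precisely for $e$ in the stated range $0 \le e < d$ one reads indices of $\bE$ modulo $d$, consistently with the decomposition $\bE = \bigoplus_{e=0}^{d-1}\bE_e$; I would note that $\bE_0$ is the pullback of the Hodge bundle from $\overline{M}_{0,n}$, which is trivial, so $\lambda_1^0 = 0$ and the identity holds trivially there as well.

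There is essentially no obstacle here: the entire content has been front-loaded into the $G$-Mumford relation \eqref{eq:GMumf} and the duality \eqref{eq:dual}, both of which are quoted from the literature in the excerpt, so the proof is a two-line formal manipulation with Chern classes. If anything needs care it is only bookkeeping of which representation gets which index under dualization — i.e. making sure one does not accidentally prove $\lambda_1^e = \lambda_1^e$ instead of $\lambda_1^e = \lambda_1^{d-e}$ — but this is settled unambiguously by \eqref{eq:dual}. I would therefore present the argument compactly, perhaps even inline, rather than as a displayed multi-step computation.
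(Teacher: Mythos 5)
Your proof is correct and follows exactly the paper's argument: extract the degree-one part of the $G$-Mumford relation \eqref{eq:GMumf} to get $\lambda_1^e + c_1((\bE^\vee)_e)=0$, then use the duality \eqref{eq:dual} to identify $c_1((\bE^\vee)_e)=-\lambda_1^{d-e}$. The extra remark about the $e=0$ case is harmless but not needed.
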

\begin{proof}
We temporarily denote by $\hat\lambda^e_1$ the first Chern class of the $e$-eigenbundle of the dual of the Hodge bundle on the space $\Adm{m_1, \ldots, m_n}{d}$.
Equation \eqref{eq:dual} implies that $\hat\lambda_1^e = -\lambda_1^{d-e}$; the $G$-Mumford relation \eqref{eq:GMumf}  implies $\lambda_1^e+\hat\lambda_1^e =0$. Combining the two equations the statement follows.
\end{proof}
\subsubsection{Psi classes.}
We recommend \cite{k:pc} for an  introduction to $\psi$ classes on moduli spaces of curves. 

\begin{defi}
Consider the moduli space $\Adm{m_1, \ldots, m_n}{d}$. For $1\leq i\leq n$
we denote by $\psi_i$ the pullback
$br^\ast(\psi_i)$, where we assume the notion of $\psi$ classes on $\overline{M}_{0,n}$.
\end{defi}
To attach some meaning to this definition for readers who are completely unfamiliar with $\psi $ classes, the class $\psi_i$ is the first Chern class of a line bundle on  $\Adm{m_1, \ldots, m_n}{d}$ whose fiber over a moduli point $E\to C$ is canonically identified with the cotangent line of $C$ at the $i$-th branch point.

By the projection formula, we have
\begin{equation}
    \int_{\Adm{m_1, \ldots, m_n}{d}} \prod \psi_i^{k_i} = \frac{1}{d}\int_{\overline{M}_{0,n}} \prod \psi_i^{k_i} =\frac{1}{d}{{n-3}\choose{k_1, \ldots, k_n}}.
\end{equation}


\subsection{Atiyah-Bott localization}
We give a brief account of localization and develop some details geared to our application of it. We follow the  language and notations in \cite[chapters $4$ and $27$]{clay:ms}. A complete reference for this technique for moduli spaces of maps from orbifold curves is \cite{l:localize}.

Consider the  one-dimensional algebraic torus $\mathbb{C}^\ast$, and recall that the $\mathbb{C}^\ast$-equivariant Chow ring of a point is a polynomial ring in one variable:
$$A^\ast_{\C^\ast}(\{pt\},\mathbb{C})= \mathbb{C}[t], $$
with $t = c_1(L_1).$

Let $\C^\ast$ act on a smooth, proper stack $X$, denote by $i_k:F_k\hookrightarrow X$ the irreducible components of the fixed locus for this action and by $N_{F_k}$ their normal bundles. The natural map:
$$
\begin{array}{ccc}
A^\ast_{\C^\ast}(X) \otimes \mathbb{C}(t) & \rightarrow & \sum_{k}{A^\ast_{\C^\ast}}(F_k) \otimes \mathbb{C}(t)\\
                                             &             &                                        \\
\alpha                                       & \mapsto     &\displaystyle{\frac{i_k^\ast\alpha}{c_{top}(N_{F_k})}}.
\end{array}
$$
is an isomorphism. Pushing forward equivariantly to the class of a point, one has the Atiyah-Bott integration formula:
\begin{equation}\label{eq:abloc}\int_{[X]}\alpha = \sum_k \int_{[F_k]} \frac{i_k^\ast\alpha}{c_{top}(N_{F_k})}.
\end{equation}

Let $\mathbb{C}^\ast$ act on a two-dimensional vector space $V$ via:
$$t\cdot(z_0,z_1)=(tz_0,z_1).$$
This action descends to $\Pro^1$  with fixed points $0=(1:0)$ and $\infty=(0:1)$. An equivariant lift of the $\C^\ast$ action to a line bundle $\Ocal_{\Pro^1}(d)$ over $\Pro^1$ is uniquely determined by the representations (i.e. line bundles over $B\C^\ast$)  $L_{j(0)},L_{j(\infty)}$ of the fibers over the fixed points. One may check that the weights $(j(0), j(\infty))$ satisfy $j(0)-j(\infty) = d$.

\section{The degree of Hurwitz-Hodge classes $\lambda_1^e$}

In this section we compute the degree of the classes  $\lambda_1^e$ on moduli spaces of cyclic covers with exactly four branch points. We use the Atyiah-Bott localization formula \eqref{eq:abloc} to obtain a relation that allows us to determine the desired degrees  in terms of the known degrees of  $\psi$ classes and of zero-dimensional boundary strata.

\label{loc:orb}

 We repeat the statement of  Theorem \ref{thm:orbla} in a slightly different way which, while less compact, may be more transparent.

\begin{theoremola}
 Let $d$ be a positive integer, $0\leq e<d$ and $m_1, m_2,m_3,m_4$ be a monodromy datum.
Assume, without loss of generality, that 
$$
\left\langle\frac{em_1}{d}\right\rangle \leq\left\langle\frac{em_2}{d}\right\rangle  \leq\left\langle\frac{em_3}{d}\right\rangle 
\leq \left\langle\frac{em_4}{d}\right\rangle. 
$$
The degree of the orbifold class $\lambda^e_1$ on the one-dimensional space of degree $d$ cyclic admissible covers of a rational curve with monodromies $m_i$ is given by the following  formula:
\begin{equation}\label{eq:onedimorbb}
\int_{\Adm{m_1, m_2, m_3, m_4}{d}}\lambda^e_1 =
\left\{
\begin{array}{cl}
0 & \mbox{if} \ \sum_{i=1}^4\left\langle\frac{em_i}{d}\right\rangle = 0\\ & \\
0 & \mbox{if} \ \sum_{i=1}^4\left\langle\frac{em_i}{d}\right\rangle = 1\\ & \\
\frac{1}{d}\left\langle\frac{em_1}{d}\right\rangle & \mbox{if}\ \sum_{i=1}^4\left\langle\frac{em_i}{d}\right\rangle = 2\
\mbox{and} \ \left\langle\frac{em_1}{d}\right\rangle +\left\langle\frac{em_4}{d}\right\rangle \leq 1 \\ & \\ 
\frac{1}{d}\left(1-\left\langle\frac{em_4}{d}\right\rangle\right) & \mbox{if}\ \ \sum_{i=1}^4\left\langle\frac{em_i}{d}\right\rangle = 2\
\mbox{and} \ \left\langle\frac{em_1}{d}\right\rangle +\left\langle\frac{em_4}{d}\right\rangle > 1 \\ & \\
0 &\mbox{if} \ \sum_{i=1}^4\left\langle\frac{em_i}{d}\right\rangle = 3.
\end{array}
\right.
\end{equation}
\end{theoremola}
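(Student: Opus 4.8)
The plan is to apply Atiyah-Bott localization \eqref{eq:abloc} to a carefully chosen vanishing integral on the space of admissible covers of a parameterized $\Pro^1$, namely
$$
\Adm{\Pro^1\mid m_1, m_2, m_3, m_4}{d},
$$
which is a moduli space of dimension $2$ carrying the induced $\C^\ast$-action. The integrand will be built so that its only nonzero dimension ($2$) piece is a product of an equivariant $\lambda_1^e$ factor and an equivariant $\psi$ or trivial factor, and so that the whole integral vanishes for degree reasons (e.g.\ the top self-intersection of a pulled-back line bundle from $\Pro^1$, or a product involving an equivariant Euler class that is not of top degree). Concretely I would take a line bundle of the form $\Lcal = \Ocal_{\Pro^1}(a)\otimes\Ocal_e$ with an equivariant lift whose weights $(j(0),j(\infty))$ at the two torus-fixed points of the base $\Pro^1$ are tuned so that $R^\bullet\pi_\ast f^\ast\Lcal$ has, after using \eqref{eq:unravel} and orbifold Riemann-Roch \eqref{eq:rankhodgeorb}, virtual rank $0$; then $c_{\mathrm{top}}$ of the associated K-theory class (or rather the class $\frac{c(\bE_e^\vee\text{-piece})}{c(\bE_e\text{-piece})}$ governed by the $G$-Mumford relation \eqref{eq:GMumf}) is a well-defined equivariant class of positive codimension, and integrating it against $1$ over the $2$-dimensional space gives $0$. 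This is exactly the strategy of \cite{fp:hiagwt}, \cite{r:adm}, \cite{r:tqft}.

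Next I would enumerate the $\C^\ast$-fixed loci. For admissible covers of a parameterized $\Pro^1$ the fixed loci are indexed by the ways of distributing the four branch points, together with possible ``sprouted'' rational tails over $0$ and $\infty$; because our base moduli space has dimension $2$, each fixed locus is itself (a quotient of) a product of at most two copies of one-dimensional spaces $\Adm{\cdot,\cdot,\cdot,\cdot}{d}$ or a point, so the localization contributions are polynomial in $t$ with coefficients that are either degrees of $\psi$ classes (known: $\int\psi_i = \frac{1}{d}\binom{1}{\ldots}$) or the unknown degrees $\int_{\Adm{m_1,m_2,m_3,m_4}{d}}\lambda_1^e$ we are after, or $0$-dimensional integrals. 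The key bookkeeping is the age/weight contribution at each twisted point and node: the $e$-eigenpiece of $H^1$ contributes ages $\langle em_i/d\rangle$, and the normal bundle Euler classes produce denominators whose $t$-expansion must be handled. Setting the total equal to $0$ and extracting the appropriate coefficient of $t$ yields a linear relation whose principal term is the desired degree of $\lambda_1^e$.

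I would then organize the resulting relation according to the value of $\sum_{i=1}^4\langle em_i/d\rangle$, which by \eqref{eq:rankhodgeorb} is $\rk(\bE_e)+1$ and is an integer between $0$ and $3$ (since each summand lies in $[0,1)$, and the four values are ordered). When the sum is $0$ or $1$ the bundle $\bE_e$ has rank $0$, so $\lambda_1^e=0$ trivially, disposing of the first two and the last case (for the last, $\sum = 3$ means $\bE_e$ has rank $2$ but one checks the relation forces the answer $0$; alternatively one can invoke Lemma \ref{lem:sym} together with the $\sum=1$ case applied to $d-e$, since $\sum_i\langle em_i/d\rangle + \sum_i\langle(d-e)m_i/d\rangle$ equals the number of nonzero $\langle em_i/d\rangle$, which pairs the rank-$2$ case with a rank-$0$ case). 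The substantive content is the case $\sum = 2$, i.e.\ $\rk\bE_e = 1$, where $\lambda_1^e$ is the degree of an honest line bundle; here the two subcases $\langle em_1/d\rangle + \langle em_4/d\rangle \leq 1$ versus $>1$ correspond to genuinely different combinatorial configurations of the fixed loci (which pairs of branch points can collide on a rational bridge while keeping all monodromy data admissible), giving the two formulas $\frac{1}{d}\langle em_1/d\rangle$ and $\frac{1}{d}(1-\langle em_4/d\rangle)$. Finally I would reconcile this with the compact ``$\min$'' form \eqref{eq:onedimorb}: when $\sum = 2$, $\left(\sum_i\langle em_i/d\rangle\right)-1 = 1 \geq \frac{1}{d}\langle em_1/d\rangle$ and $3 - \sum_i\langle em_i/d\rangle = 1 \geq \frac{1}{d}(1-\langle em_4/d\rangle)$, so the min is attained at the first argument; when $\sum\in\{0,1\}$ the first argument $\frac{1}{d}\langle em_1/d\rangle$ dominates a nonpositive second argument forcing $0$ (for $\sum=1$ one needs $\langle em_1/d\rangle = 0$, which holds since the three smaller ages must vanish), and when $\sum = 3$ the second branch's min with $3-\sum = 0$ gives $0$; so the two presentations agree.

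The main obstacle I anticipate is the fixed-locus analysis and the precise weight/age accounting in the localization contributions: correctly identifying which distributions of branch points over the ``bridge'' components produce valid admissible covers (the induced monodromies must form a monodromy datum in the sense of Definition \ref{def:mondat}, in particular the connectedness $\gcd$ condition), correctly computing the ages of $f^\ast\Lcal$ at the new twisted nodes, and keeping the equivariant Euler classes of normal bundles (which involve the smoothing-of-node $\psi + \psi$ classes and the deformation weights at $0$ and $\infty$) straight so that the $t$-coefficient extraction is unambiguous. Choosing the twist $a$ and the lift weights $(j(0), j(\infty))$ optimally so that exactly one unknown survives — rather than a relation entangling $\lambda_1^e$ for several monodromy data — is the delicate design choice; I expect one needs $a$ small (likely $a=0$ or $a=1$) and the eigen-piece bookkeeping to do the rest, as in \cite{r:tqft}.
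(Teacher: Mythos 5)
Your overall strategy is the paper's: dispose of the cases $\sum\langle em_i/d\rangle\in\{0,1\}$ by the rank computation \eqref{eq:rankhodgeorb}, pair the case $\sum=3$ with the case $\sum=1$ via Lemma \ref{lem:sym} and $e\mapsto d-e$ (exactly as the paper does), and obtain the substantive case $\sum=2$ from Atiyah--Bott localization of an auxiliary vanishing integral on $\Adm{\Pro^1\mid m_1,m_2,m_3,m_4}{d}$. However, the part of the argument that carries the actual content is left unexecuted, and your concrete setup has errors that would have to be repaired before it could be carried out. First, the parameterized space has dimension $4$ (the four branch points move on the rigidified $\Pro^1$), not $2$; the paper's auxiliary integral is the degree-$3$ class $ev_4^\ast\bigl(c_1(\Ocal_{\Pro^1}(1))\bigr)\cdot c_2\bigl(R^1\pi_\ast f^\ast(\Ocal_{\Pro^1}(-1)\boxtimes\Ocal_e)\bigr)$ on this $4$-fold, with linearizations $(1,0)$ and $(0,1)$, and vanishes purely for dimension reasons; your ``virtual rank zero, $c_{\mathrm{top}}$ integrated against $1$'' proposal is not pinned down and, as described, does not isolate a single unknown. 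Second, you plan to read off \emph{both} $\sum=2$ subcases from one localization run, but the paper first reduces the subcase $\langle em_1/d\rangle+\langle em_4/d\rangle\le 1$ to the subcase $>1$ by the same $e\mapsto d-e$ symmetry (which reverses the ordering of the ages), and then localizes only under the condition $\langle em_1/d\rangle+\langle em_4/d\rangle>1$. That numerical condition is used essentially: it is what guarantees, via orbifold Riemann--Roch, that on the zero-dimensional fixed loci $\Gamma_{\{i,j\}}$ (with $i,j\in\{1,2,3\}$) the restricted bundle has a rank-one Hodge summand, so its $c_2$ has no pure $t^2$ term and these loci contribute $0$. Without that reduction your relation would contain contributions from these loci (and from $\Gamma_\phi$) that you have not analyzed, and the ``principal term plus known $\psi$ integrals'' shape you assert is not automatic.

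Beyond these structural points, the heart of the proof --- restricting $R^1\pi_\ast f^\ast(\Ocal_{\Pro^1}(-1)\boxtimes\Ocal_e)$ to each fixed locus via normalization sequences, identifying the equivariant line summands $L_{\langle em_j/d\rangle}$ over $0$ and $L_{1-\langle em_4/d\rangle}$ over $\infty$, assembling the normal-bundle Euler classes, and extracting the relation $0=\sum_{j=1}^3\langle em_j/d\rangle\int\lambda_1^e+(\langle em_4/d\rangle-1)\bigl(\int\lambda_1^e+\int\psi_\infty\bigr)$, which together with $\sum_{i=1}^4\langle em_i/d\rangle=2$ and $\int\psi_\infty=1/d$ gives the stated value --- is only promised in your write-up, not performed, and this is precisely where the formula comes from. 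A small further inaccuracy: your parenthetical claim that $\sum=1$ forces $\langle em_1/d\rangle=0$ is false (take all four ages equal to $1/4$); this only affects your optional reconciliation with the $\min$ form of Theorem \ref{thm:orbla}, not the expanded statement you were asked to prove, but it signals that the case bookkeeping needs more care.
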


\begin{proof}
We observe  that the class $\lambda_1^0 = 0$, as there are no invariant one-forms on an admissible cover of a rational curve (and therefore $\bE_0$ is a rank $0$ bundle). This proves the first line in \eqref{eq:onedimorbb}.

By the orbifold Riemann-Roch computation \eqref{eq:rankhodgeorb}, the rank of $\bE_e$ is zero when $\sum_{i=1}^4 \left\langle\frac{em_i}{d} \right\rangle=1$, which implies that $\lambda_1^e = 0$ in this case. Since $\lambda_1^e = \lambda_1^{d-e}$ by Lemma \ref{lem:sym} and $\sum_{i=1}^4 \left\langle\frac{em_i}{d} \right\rangle=1$ if an only if $\sum_{i=1}^4 \left\langle\frac{(d-e)m_i}{d} \right\rangle=3$, we obtain that the class vanishes when the sum of the ages is  $3$. Thus the second and fifth lines of \eqref{eq:onedimorbb} are established.

The third and fourth lines are also equivalent: $e,m_1, m_2, m_3, m_4$ satisfy the two numerical conditions of the third line if and only if $d-e, m_1, m_2, m_3, m_4$ satisfy the conditions from the fourth line; the ordering of the fractional parts of $(d-e)m_i/d$ is reversed hence the smallest term is
$$
\left\langle\frac{(d-e)m_4}{d} \right\rangle = 1- \left\langle\frac{em_4}{d} \right\rangle.
$$
Thus establishing that the fourth line in \eqref{eq:onedimorbb} holds completes the proof of the theorem. 
\subsubsection*{The auxiliary integral} Given a monodromy datum $(m_1, m_2,m_3, m_4)$ and an integer $1\leq e \leq d-1$ satisfying the numerical conditions in the fourth line of \eqref{eq:onedimorbb}, we  consider the space of parameterized admissible covers, which we denote by $\Adm{\Pro^1| m_1, m_2,m_3, m_4}{d}$. Letting $f, \pi$ denote the tautological morphisms as in \eqref{eq:acuc}, we have 
\begin{equation} \label{eq:auxint}
    \int_{\Adm{\Pro^1| m_1, m_2,m_3, m_4}{d}} ev_4^\ast(c_1(\Ocal_{\Pro^1}(1))) \cdot c_2(R^1\pi_\ast f^\ast(\Ocal_{\Pro^1}(-1)\boxtimes \Ocal_e)) = 0;
\end{equation}
the integral \eqref{eq:auxint} vanishes for dimension reasons: we are integrating a class of degree $3$ on a space of dimension $4$.

The natural $\C^\ast$ action on $\Pro^1$ induces a torus action on $\Adm{\Pro^1| m_1, m_2,m_3, m_4}{d}$ by post-composition. We may therefore consider equivariant lifts of the integrands and evaluate the integral using the localization formula \eqref{eq:abloc}. Integration in equivariant cohomology  yields a polynomial in the equivariant parameter, hence the fact that the degree of the integrand is stricly less than the dimension of the space implies that the vanishing of \eqref{eq:auxint} continues to hold, regardless of the choice of equivariant lifts of the classes. 

We choose the linearization $(j(0),j(\infty)) = (1,0)$ for the bundle $\Ocal_{\Pro^1}(1)$ so that the first Chern class is represented by the class of the fixed point $0\in \Pro^1$. 
For the bundle $\Ocal_{\Pro^1}(-1)$ we choose the linearization $(0, 1)$.

\subsubsection*{Fixed loci.} An admissible cover is fixed under the $\C^\ast$-action when the four evaluation morphisms have image contained in the fixed locus of $\Pro^1$, i.e. the two points $0, \infty$. Fixed loci may be indexed by elements of the power set $\Pcal([4])$, assigning to a subset $I$ the locus $\Gamma_I$ of maps where the marked points in $I$ are mapped to $\infty$, and those in $I^c$ to $0$.

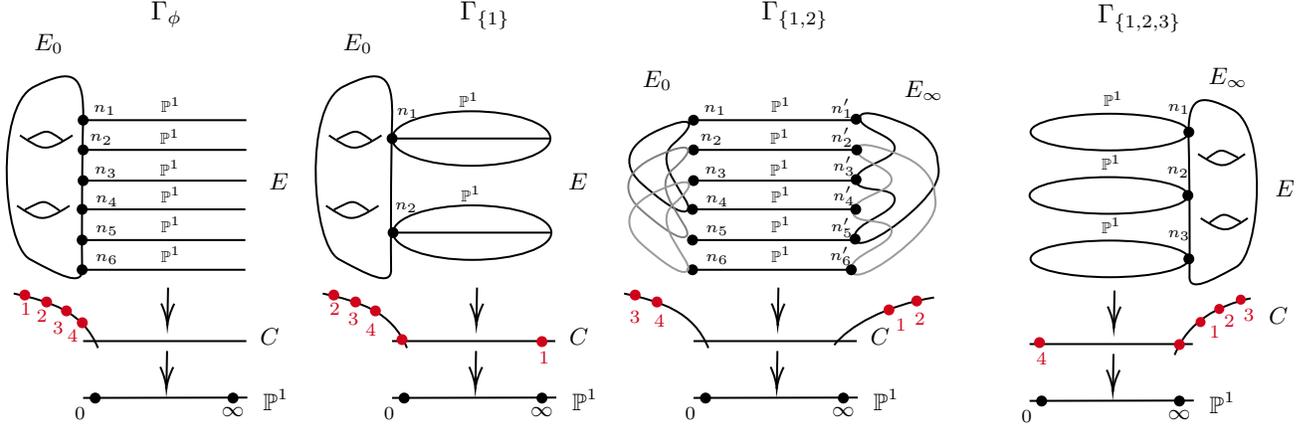
\begin{figure}
    \centering
\tikzset{every picture/.style={line width=0.75pt}} 

\begin{tikzpicture}[x=0.75pt,y=0.75pt,yscale=-1,xscale=1]

\draw    (40.17,280.17) -- (123,280) ;
\draw    (40.17,251.5) -- (122.5,251.5) ;
\draw  [fill={rgb, 255:red, 0; green, 0; blue, 0 }  ,fill opacity=1 ] (44,280.25) .. controls (44,279.01) and (45.01,278) .. (46.25,278) .. controls (47.49,278) and (48.5,279.01) .. (48.5,280.25) .. controls (48.5,281.49) and (47.49,282.5) .. (46.25,282.5) .. controls (45.01,282.5) and (44,281.49) .. (44,280.25) -- cycle ;
\draw  [fill={rgb, 255:red, 0; green, 0; blue, 0 }  ,fill opacity=1 ] (113.5,280.25) .. controls (113.5,279.01) and (114.51,278) .. (115.75,278) .. controls (116.99,278) and (118,279.01) .. (118,280.25) .. controls (118,281.49) and (116.99,282.5) .. (115.75,282.5) .. controls (114.51,282.5) and (113.5,281.49) .. (113.5,280.25) -- cycle ;
\draw    (5.17,227.5) .. controls (23.67,231.5) and (40.17,236.5) .. (47.67,255) ;
\draw    (82.5,256.5) -- (82.2,273.5) ;
\draw [shift={(82.17,275.5)}, rotate = 271.01] [color={rgb, 255:red, 0; green, 0; blue, 0 }  ][line width=0.75]    (10.93,-3.29) .. controls (6.95,-1.4) and (3.31,-0.3) .. (0,0) .. controls (3.31,0.3) and (6.95,1.4) .. (10.93,3.29)   ;
\draw    (83,224.5) -- (82.7,241.5) ;
\draw [shift={(82.67,243.5)}, rotate = 271.01] [color={rgb, 255:red, 0; green, 0; blue, 0 }  ][line width=0.75]    (10.93,-3.29) .. controls (6.95,-1.4) and (3.31,-0.3) .. (0,0) .. controls (3.31,0.3) and (6.95,1.4) .. (10.93,3.29)   ;
\draw  [color={rgb, 255:red, 208; green, 2; blue, 27 }  ,draw opacity=1 ][fill={rgb, 255:red, 208; green, 2; blue, 27 }  ,fill opacity=1 ] (8.5,228.25) .. controls (8.5,227.01) and (9.51,226) .. (10.75,226) .. controls (11.99,226) and (13,227.01) .. (13,228.25) .. controls (13,229.49) and (11.99,230.5) .. (10.75,230.5) .. controls (9.51,230.5) and (8.5,229.49) .. (8.5,228.25) -- cycle ;
\draw  [color={rgb, 255:red, 208; green, 2; blue, 27 }  ,draw opacity=1 ][fill={rgb, 255:red, 208; green, 2; blue, 27 }  ,fill opacity=1 ] (19.5,231.75) .. controls (19.5,230.51) and (20.51,229.5) .. (21.75,229.5) .. controls (22.99,229.5) and (24,230.51) .. (24,231.75) .. controls (24,232.99) and (22.99,234) .. (21.75,234) .. controls (20.51,234) and (19.5,232.99) .. (19.5,231.75) -- cycle ;
\draw  [color={rgb, 255:red, 208; green, 2; blue, 27 }  ,draw opacity=1 ][fill={rgb, 255:red, 208; green, 2; blue, 27 }  ,fill opacity=1 ] (29.5,236.25) .. controls (29.5,235.01) and (30.51,234) .. (31.75,234) .. controls (32.99,234) and (34,235.01) .. (34,236.25) .. controls (34,237.49) and (32.99,238.5) .. (31.75,238.5) .. controls (30.51,238.5) and (29.5,237.49) .. (29.5,236.25) -- cycle ;
\draw  [color={rgb, 255:red, 208; green, 2; blue, 27 }  ,draw opacity=1 ][fill={rgb, 255:red, 208; green, 2; blue, 27 }  ,fill opacity=1 ] (37.5,242.25) .. controls (37.5,241.01) and (38.51,240) .. (39.75,240) .. controls (40.99,240) and (42,241.01) .. (42,242.25) .. controls (42,243.49) and (40.99,244.5) .. (39.75,244.5) .. controls (38.51,244.5) and (37.5,243.49) .. (37.5,242.25) -- cycle ;
\draw    (39.67,215.5) -- (122,215.5) ;
\draw    (39.67,200.5) -- (122,200.5) ;
\draw    (39.67,185) -- (122,185) ;
\draw    (40.17,170.5) -- (122.5,170.5) ;
\draw    (40.17,155) -- (122.5,155) ;
\draw    (40.17,140) -- (122.5,140) ;
\draw   (15.33,123.17) .. controls (35.33,113.17) and (40.83,116.5) .. (40.17,140) .. controls (39.5,163.5) and (39.5,206.5) .. (39.67,215.5) .. controls (39.83,224.5) and (27.17,217.67) .. (13.33,210.17) .. controls (-0.5,202.67) and (-4.67,133.17) .. (15.33,123.17) -- cycle ;
\draw    (8.33,147.67) .. controls (19.83,155.17) and (20.83,156.67) .. (34.83,147.67) ;
\draw    (12.33,149.67) .. controls (22.33,144.67) and (21.33,144.17) .. (29.83,150.17) ;
\draw    (6.83,183.17) .. controls (18.33,190.67) and (19.33,192.17) .. (33.33,183.17) ;
\draw    (10.83,185.17) .. controls (20.83,180.17) and (19.83,179.67) .. (28.33,185.67) ;
\draw  [fill={rgb, 255:red, 0; green, 0; blue, 0 }  ,fill opacity=1 ] (37.42,215.5) .. controls (37.42,214.26) and (38.42,213.25) .. (39.67,213.25) .. controls (40.91,213.25) and (41.92,214.26) .. (41.92,215.5) .. controls (41.92,216.74) and (40.91,217.75) .. (39.67,217.75) .. controls (38.42,217.75) and (37.42,216.74) .. (37.42,215.5) -- cycle ;
\draw  [fill={rgb, 255:red, 0; green, 0; blue, 0 }  ,fill opacity=1 ] (37.42,200.5) .. controls (37.42,199.26) and (38.42,198.25) .. (39.67,198.25) .. controls (40.91,198.25) and (41.92,199.26) .. (41.92,200.5) .. controls (41.92,201.74) and (40.91,202.75) .. (39.67,202.75) .. controls (38.42,202.75) and (37.42,201.74) .. (37.42,200.5) -- cycle ;
\draw  [fill={rgb, 255:red, 0; green, 0; blue, 0 }  ,fill opacity=1 ] (37.42,185) .. controls (37.42,183.76) and (38.42,182.75) .. (39.67,182.75) .. controls (40.91,182.75) and (41.92,183.76) .. (41.92,185) .. controls (41.92,186.24) and (40.91,187.25) .. (39.67,187.25) .. controls (38.42,187.25) and (37.42,186.24) .. (37.42,185) -- cycle ;
\draw  [fill={rgb, 255:red, 0; green, 0; blue, 0 }  ,fill opacity=1 ] (37.92,170.5) .. controls (37.92,169.26) and (38.92,168.25) .. (40.17,168.25) .. controls (41.41,168.25) and (42.42,169.26) .. (42.42,170.5) .. controls (42.42,171.74) and (41.41,172.75) .. (40.17,172.75) .. controls (38.92,172.75) and (37.92,171.74) .. (37.92,170.5) -- cycle ;
\draw  [fill={rgb, 255:red, 0; green, 0; blue, 0 }  ,fill opacity=1 ] (37.92,155) .. controls (37.92,153.76) and (38.92,152.75) .. (40.17,152.75) .. controls (41.41,152.75) and (42.42,153.76) .. (42.42,155) .. controls (42.42,156.24) and (41.41,157.25) .. (40.17,157.25) .. controls (38.92,157.25) and (37.92,156.24) .. (37.92,155) -- cycle ;
\draw  [fill={rgb, 255:red, 0; green, 0; blue, 0 }  ,fill opacity=1 ] (37.92,140) .. controls (37.92,138.76) and (38.92,137.75) .. (40.17,137.75) .. controls (41.41,137.75) and (42.42,138.76) .. (42.42,140) .. controls (42.42,141.24) and (41.41,142.25) .. (40.17,142.25) .. controls (38.92,142.25) and (37.92,141.24) .. (37.92,140) -- cycle ;
\draw    (196,280.17) -- (278.83,280) ;
\draw    (196,251.5) -- (278.33,251.5) ;
\draw  [fill={rgb, 255:red, 0; green, 0; blue, 0 }  ,fill opacity=1 ] (199.83,280.25) .. controls (199.83,279.01) and (200.84,278) .. (202.08,278) .. controls (203.33,278) and (204.33,279.01) .. (204.33,280.25) .. controls (204.33,281.49) and (203.33,282.5) .. (202.08,282.5) .. controls (200.84,282.5) and (199.83,281.49) .. (199.83,280.25) -- cycle ;
\draw  [fill={rgb, 255:red, 0; green, 0; blue, 0 }  ,fill opacity=1 ] (269.33,280.25) .. controls (269.33,279.01) and (270.34,278) .. (271.58,278) .. controls (272.83,278) and (273.83,279.01) .. (273.83,280.25) .. controls (273.83,281.49) and (272.83,282.5) .. (271.58,282.5) .. controls (270.34,282.5) and (269.33,281.49) .. (269.33,280.25) -- cycle ;
\draw    (161,227.5) .. controls (179.5,231.5) and (196,236.5) .. (203.5,255) ;
\draw    (238.33,256.5) -- (238.04,273.5) ;
\draw [shift={(238,275.5)}, rotate = 271.01] [color={rgb, 255:red, 0; green, 0; blue, 0 }  ][line width=0.75]    (10.93,-3.29) .. controls (6.95,-1.4) and (3.31,-0.3) .. (0,0) .. controls (3.31,0.3) and (6.95,1.4) .. (10.93,3.29)   ;
\draw    (238.83,224.5) -- (238.54,241.5) ;
\draw [shift={(238.5,243.5)}, rotate = 271.01] [color={rgb, 255:red, 0; green, 0; blue, 0 }  ][line width=0.75]    (10.93,-3.29) .. controls (6.95,-1.4) and (3.31,-0.3) .. (0,0) .. controls (3.31,0.3) and (6.95,1.4) .. (10.93,3.29)   ;
\draw  [color={rgb, 255:red, 208; green, 2; blue, 27 }  ,draw opacity=1 ][fill={rgb, 255:red, 208; green, 2; blue, 27 }  ,fill opacity=1 ] (164.33,228.25) .. controls (164.33,227.01) and (165.34,226) .. (166.58,226) .. controls (167.83,226) and (168.83,227.01) .. (168.83,228.25) .. controls (168.83,229.49) and (167.83,230.5) .. (166.58,230.5) .. controls (165.34,230.5) and (164.33,229.49) .. (164.33,228.25) -- cycle ;
\draw  [color={rgb, 255:red, 208; green, 2; blue, 27 }  ,draw opacity=1 ][fill={rgb, 255:red, 208; green, 2; blue, 27 }  ,fill opacity=1 ] (175.33,231.75) .. controls (175.33,230.51) and (176.34,229.5) .. (177.58,229.5) .. controls (178.83,229.5) and (179.83,230.51) .. (179.83,231.75) .. controls (179.83,232.99) and (178.83,234) .. (177.58,234) .. controls (176.34,234) and (175.33,232.99) .. (175.33,231.75) -- cycle ;
\draw  [color={rgb, 255:red, 208; green, 2; blue, 27 }  ,draw opacity=1 ][fill={rgb, 255:red, 208; green, 2; blue, 27 }  ,fill opacity=1 ] (185.33,236.25) .. controls (185.33,235.01) and (186.34,234) .. (187.58,234) .. controls (188.83,234) and (189.83,235.01) .. (189.83,236.25) .. controls (189.83,237.49) and (188.83,238.5) .. (187.58,238.5) .. controls (186.34,238.5) and (185.33,237.49) .. (185.33,236.25) -- cycle ;
\draw  [color={rgb, 255:red, 208; green, 2; blue, 27 }  ,draw opacity=1 ][fill={rgb, 255:red, 208; green, 2; blue, 27 }  ,fill opacity=1 ] (269.33,251.75) .. controls (269.33,250.51) and (270.34,249.5) .. (271.58,249.5) .. controls (272.83,249.5) and (273.83,250.51) .. (273.83,251.75) .. controls (273.83,252.99) and (272.83,254) .. (271.58,254) .. controls (270.34,254) and (269.33,252.99) .. (269.33,251.75) -- cycle ;
\draw   (171.17,123.17) .. controls (191.17,113.17) and (196.67,116.5) .. (196,140) .. controls (195.33,163.5) and (195.33,206.5) .. (195.5,215.5) .. controls (195.67,224.5) and (183,217.67) .. (169.17,210.17) .. controls (155.33,202.67) and (151.17,133.17) .. (171.17,123.17) -- cycle ;
\draw    (164.17,147.67) .. controls (175.67,155.17) and (176.67,156.67) .. (190.67,147.67) ;
\draw    (168.17,149.67) .. controls (178.17,144.67) and (177.17,144.17) .. (185.67,150.17) ;
\draw    (162.67,183.17) .. controls (174.17,190.67) and (175.17,192.17) .. (189.17,183.17) ;
\draw    (166.67,185.17) .. controls (176.67,180.17) and (175.67,179.67) .. (184.17,185.67) ;
\draw  [fill={rgb, 255:red, 0; green, 0; blue, 0 }  ,fill opacity=1 ] (194.25,196.75) .. controls (194.25,195.51) and (195.26,194.5) .. (196.5,194.5) .. controls (197.74,194.5) and (198.75,195.51) .. (198.75,196.75) .. controls (198.75,197.99) and (197.74,199) .. (196.5,199) .. controls (195.26,199) and (194.25,197.99) .. (194.25,196.75) -- cycle ;
\draw  [fill={rgb, 255:red, 0; green, 0; blue, 0 }  ,fill opacity=1 ] (193.75,149.25) .. controls (193.75,148.01) and (194.76,147) .. (196,147) .. controls (197.24,147) and (198.25,148.01) .. (198.25,149.25) .. controls (198.25,150.49) and (197.24,151.5) .. (196,151.5) .. controls (194.76,151.5) and (193.75,150.49) .. (193.75,149.25) -- cycle ;
\draw  [color={rgb, 255:red, 208; green, 2; blue, 27 }  ,draw opacity=1 ][fill={rgb, 255:red, 208; green, 2; blue, 27 }  ,fill opacity=1 ] (198.75,250.75) .. controls (198.75,249.51) and (199.76,248.5) .. (201,248.5) .. controls (202.24,248.5) and (203.25,249.51) .. (203.25,250.75) .. controls (203.25,251.99) and (202.24,253) .. (201,253) .. controls (199.76,253) and (198.75,251.99) .. (198.75,250.75) -- cycle ;
\draw   (196,149.25) .. controls (196,141.66) and (213.87,135.5) .. (235.92,135.5) .. controls (257.96,135.5) and (275.83,141.66) .. (275.83,149.25) .. controls (275.83,156.84) and (257.96,163) .. (235.92,163) .. controls (213.87,163) and (196,156.84) .. (196,149.25) -- cycle ;
\draw    (196,149.25) -- (275.83,149.25) ;
\draw   (196.5,196.75) .. controls (196.5,189.16) and (214.37,183) .. (236.42,183) .. controls (258.46,183) and (276.33,189.16) .. (276.33,196.75) .. controls (276.33,204.34) and (258.46,210.5) .. (236.42,210.5) .. controls (214.37,210.5) and (196.5,204.34) .. (196.5,196.75) -- cycle ;
\draw    (196.5,196.75) -- (276.33,196.75) ;
\draw    (348,280.17) -- (430.83,280) ;
\draw    (348,251.5) -- (430.33,251.5) ;
\draw  [fill={rgb, 255:red, 0; green, 0; blue, 0 }  ,fill opacity=1 ] (351.83,280.25) .. controls (351.83,279.01) and (352.84,278) .. (354.08,278) .. controls (355.33,278) and (356.33,279.01) .. (356.33,280.25) .. controls (356.33,281.49) and (355.33,282.5) .. (354.08,282.5) .. controls (352.84,282.5) and (351.83,281.49) .. (351.83,280.25) -- cycle ;
\draw  [fill={rgb, 255:red, 0; green, 0; blue, 0 }  ,fill opacity=1 ] (421.33,280.25) .. controls (421.33,279.01) and (422.34,278) .. (423.58,278) .. controls (424.83,278) and (425.83,279.01) .. (425.83,280.25) .. controls (425.83,281.49) and (424.83,282.5) .. (423.58,282.5) .. controls (422.34,282.5) and (421.33,281.49) .. (421.33,280.25) -- cycle ;
\draw    (313,227.5) .. controls (331.5,231.5) and (348,236.5) .. (355.5,255) ;
\draw    (390.33,256.5) -- (390.04,273.5) ;
\draw [shift={(390,275.5)}, rotate = 271.01] [color={rgb, 255:red, 0; green, 0; blue, 0 }  ][line width=0.75]    (10.93,-3.29) .. controls (6.95,-1.4) and (3.31,-0.3) .. (0,0) .. controls (3.31,0.3) and (6.95,1.4) .. (10.93,3.29)   ;
\draw    (390.83,224.5) -- (390.54,241.5) ;
\draw [shift={(390.5,243.5)}, rotate = 271.01] [color={rgb, 255:red, 0; green, 0; blue, 0 }  ][line width=0.75]    (10.93,-3.29) .. controls (6.95,-1.4) and (3.31,-0.3) .. (0,0) .. controls (3.31,0.3) and (6.95,1.4) .. (10.93,3.29)   ;
\draw  [color={rgb, 255:red, 208; green, 2; blue, 27 }  ,draw opacity=1 ][fill={rgb, 255:red, 208; green, 2; blue, 27 }  ,fill opacity=1 ] (316.33,228.25) .. controls (316.33,227.01) and (317.34,226) .. (318.58,226) .. controls (319.83,226) and (320.83,227.01) .. (320.83,228.25) .. controls (320.83,229.49) and (319.83,230.5) .. (318.58,230.5) .. controls (317.34,230.5) and (316.33,229.49) .. (316.33,228.25) -- cycle ;
\draw  [color={rgb, 255:red, 208; green, 2; blue, 27 }  ,draw opacity=1 ][fill={rgb, 255:red, 208; green, 2; blue, 27 }  ,fill opacity=1 ] (327.33,231.75) .. controls (327.33,230.51) and (328.34,229.5) .. (329.58,229.5) .. controls (330.83,229.5) and (331.83,230.51) .. (331.83,231.75) .. controls (331.83,232.99) and (330.83,234) .. (329.58,234) .. controls (328.34,234) and (327.33,232.99) .. (327.33,231.75) -- cycle ;
\draw    (347.5,215.5) -- (429.83,215.5) ;
\draw    (347.5,200.5) -- (429.83,200.5) ;
\draw    (347.5,185) -- (429.83,185) ;
\draw    (348,170.5) -- (430.33,170.5) ;
\draw    (348,155) -- (430.33,155) ;
\draw    (348,140) -- (430.33,140) ;
\draw  [fill={rgb, 255:red, 0; green, 0; blue, 0 }  ,fill opacity=1 ] (345.25,215.5) .. controls (345.25,214.26) and (346.26,213.25) .. (347.5,213.25) .. controls (348.74,213.25) and (349.75,214.26) .. (349.75,215.5) .. controls (349.75,216.74) and (348.74,217.75) .. (347.5,217.75) .. controls (346.26,217.75) and (345.25,216.74) .. (345.25,215.5) -- cycle ;
\draw  [fill={rgb, 255:red, 0; green, 0; blue, 0 }  ,fill opacity=1 ] (345.25,200.5) .. controls (345.25,199.26) and (346.26,198.25) .. (347.5,198.25) .. controls (348.74,198.25) and (349.75,199.26) .. (349.75,200.5) .. controls (349.75,201.74) and (348.74,202.75) .. (347.5,202.75) .. controls (346.26,202.75) and (345.25,201.74) .. (345.25,200.5) -- cycle ;
\draw  [fill={rgb, 255:red, 0; green, 0; blue, 0 }  ,fill opacity=1 ] (345.25,185) .. controls (345.25,183.76) and (346.26,182.75) .. (347.5,182.75) .. controls (348.74,182.75) and (349.75,183.76) .. (349.75,185) .. controls (349.75,186.24) and (348.74,187.25) .. (347.5,187.25) .. controls (346.26,187.25) and (345.25,186.24) .. (345.25,185) -- cycle ;
\draw  [fill={rgb, 255:red, 0; green, 0; blue, 0 }  ,fill opacity=1 ] (345.75,170.5) .. controls (345.75,169.26) and (346.76,168.25) .. (348,168.25) .. controls (349.24,168.25) and (350.25,169.26) .. (350.25,170.5) .. controls (350.25,171.74) and (349.24,172.75) .. (348,172.75) .. controls (346.76,172.75) and (345.75,171.74) .. (345.75,170.5) -- cycle ;
\draw  [fill={rgb, 255:red, 0; green, 0; blue, 0 }  ,fill opacity=1 ] (345.75,155) .. controls (345.75,153.76) and (346.76,152.75) .. (348,152.75) .. controls (349.24,152.75) and (350.25,153.76) .. (350.25,155) .. controls (350.25,156.24) and (349.24,157.25) .. (348,157.25) .. controls (346.76,157.25) and (345.75,156.24) .. (345.75,155) -- cycle ;
\draw  [fill={rgb, 255:red, 0; green, 0; blue, 0 }  ,fill opacity=1 ] (345.75,140) .. controls (345.75,138.76) and (346.76,137.75) .. (348,137.75) .. controls (349.24,137.75) and (350.25,138.76) .. (350.25,140) .. controls (350.25,141.24) and (349.24,142.25) .. (348,142.25) .. controls (346.76,142.25) and (345.75,141.24) .. (345.75,140) -- cycle ;
\draw    (419,253.5) .. controls (426.33,244) and (444.83,233.5) .. (469.33,229.5) ;
\draw  [color={rgb, 255:red, 208; green, 2; blue, 27 }  ,draw opacity=1 ][fill={rgb, 255:red, 208; green, 2; blue, 27 }  ,fill opacity=1 ] (458.33,231.25) .. controls (458.33,230.01) and (459.34,229) .. (460.58,229) .. controls (461.83,229) and (462.83,230.01) .. (462.83,231.25) .. controls (462.83,232.49) and (461.83,233.5) .. (460.58,233.5) .. controls (459.34,233.5) and (458.33,232.49) .. (458.33,231.25) -- cycle ;
\draw  [color={rgb, 255:red, 208; green, 2; blue, 27 }  ,draw opacity=1 ][fill={rgb, 255:red, 208; green, 2; blue, 27 }  ,fill opacity=1 ] (444.33,235.75) .. controls (444.33,234.51) and (445.34,233.5) .. (446.58,233.5) .. controls (447.83,233.5) and (448.83,234.51) .. (448.83,235.75) .. controls (448.83,236.99) and (447.83,238) .. (446.58,238) .. controls (445.34,238) and (444.33,236.99) .. (444.33,235.75) -- cycle ;
\draw  [fill={rgb, 255:red, 0; green, 0; blue, 0 }  ,fill opacity=1 ] (427.25,200) .. controls (427.25,198.76) and (428.26,197.75) .. (429.5,197.75) .. controls (430.74,197.75) and (431.75,198.76) .. (431.75,200) .. controls (431.75,201.24) and (430.74,202.25) .. (429.5,202.25) .. controls (428.26,202.25) and (427.25,201.24) .. (427.25,200) -- cycle ;
\draw  [fill={rgb, 255:red, 0; green, 0; blue, 0 }  ,fill opacity=1 ] (427.75,170) .. controls (427.75,168.76) and (428.76,167.75) .. (430,167.75) .. controls (431.24,167.75) and (432.25,168.76) .. (432.25,170) .. controls (432.25,171.24) and (431.24,172.25) .. (430,172.25) .. controls (428.76,172.25) and (427.75,171.24) .. (427.75,170) -- cycle ;
\draw  [fill={rgb, 255:red, 0; green, 0; blue, 0 }  ,fill opacity=1 ] (427.75,139.5) .. controls (427.75,138.26) and (428.76,137.25) .. (430,137.25) .. controls (431.24,137.25) and (432.25,138.26) .. (432.25,139.5) .. controls (432.25,140.74) and (431.24,141.75) .. (430,141.75) .. controls (428.76,141.75) and (427.75,140.74) .. (427.75,139.5) -- cycle ;
\draw   (330.33,145) .. controls (349.33,136.5) and (349.5,142) .. (337.83,152) .. controls (326.17,162) and (347.67,180.5) .. (345.25,185) .. controls (342.83,189.5) and (330.33,179.5) .. (321.83,172) .. controls (313.33,164.5) and (311.33,153.5) .. (330.33,145) -- cycle ;
\draw  [color={rgb, 255:red, 128; green, 128; blue, 128 }  ,draw opacity=1 ] (329.83,158) .. controls (348.83,149.5) and (349,155) .. (337.33,165) .. controls (325.67,175) and (347.17,193.5) .. (344.75,198) .. controls (342.33,202.5) and (329.83,192.5) .. (321.33,185) .. controls (312.83,177.5) and (310.83,166.5) .. (329.83,158) -- cycle ;
\draw  [color={rgb, 255:red, 155; green, 155; blue, 155 }  ,draw opacity=1 ] (330.33,175.5) .. controls (349.33,167) and (349.5,172.5) .. (337.83,182.5) .. controls (326.17,192.5) and (347.67,211) .. (345.25,215.5) .. controls (342.83,220) and (330.33,210) .. (321.83,202.5) .. controls (313.33,195) and (311.33,184) .. (330.33,175.5) -- cycle ;
\draw   (471,159.5) .. controls (470.62,154.2) and (465.67,147) .. (457,143) .. controls (448.33,139) and (431.59,133.07) .. (431.92,139) .. controls (432.25,144.93) and (449.5,147.5) .. (449.5,154) .. controls (449.5,160.5) and (430.5,162.5) .. (430,170) .. controls (429.5,177.5) and (449.5,172.5) .. (449.5,180) .. controls (449.5,187.5) and (428.5,191.5) .. (429.17,199.5) .. controls (429.83,207.5) and (450.35,194.59) .. (462.5,182) .. controls (474.65,169.41) and (471.38,164.8) .. (471,159.5) -- cycle ;
\draw  [color={rgb, 255:red, 155; green, 155; blue, 155 }  ,draw opacity=1 ] (469.42,175.5) .. controls (469.04,170.2) and (464.08,163) .. (455.42,159) .. controls (446.75,155) and (430,149.07) .. (430.33,155) .. controls (430.66,160.93) and (447.92,163.5) .. (447.92,170) .. controls (447.92,176.5) and (428.92,178.5) .. (428.42,186) .. controls (427.92,193.5) and (447.92,188.5) .. (447.92,196) .. controls (447.92,203.5) and (426.92,207.5) .. (427.58,215.5) .. controls (428.25,223.5) and (448.77,210.59) .. (460.92,198) .. controls (473.06,185.41) and (469.8,180.8) .. (469.42,175.5) -- cycle ;
\draw  [fill={rgb, 255:red, 0; green, 0; blue, 0 }  ,fill opacity=1 ] (428.08,155) .. controls (428.08,153.76) and (429.09,152.75) .. (430.33,152.75) .. controls (431.58,152.75) and (432.58,153.76) .. (432.58,155) .. controls (432.58,156.24) and (431.58,157.25) .. (430.33,157.25) .. controls (429.09,157.25) and (428.08,156.24) .. (428.08,155) -- cycle ;
\draw  [fill={rgb, 255:red, 0; green, 0; blue, 0 }  ,fill opacity=1 ] (427.58,185) .. controls (427.58,183.76) and (428.59,182.75) .. (429.83,182.75) .. controls (431.08,182.75) and (432.08,183.76) .. (432.08,185) .. controls (432.08,186.24) and (431.08,187.25) .. (429.83,187.25) .. controls (428.59,187.25) and (427.58,186.24) .. (427.58,185) -- cycle ;
\draw  [fill={rgb, 255:red, 0; green, 0; blue, 0 }  ,fill opacity=1 ] (425.33,215.5) .. controls (425.33,214.26) and (426.34,213.25) .. (427.58,213.25) .. controls (428.83,213.25) and (429.83,214.26) .. (429.83,215.5) .. controls (429.83,216.74) and (428.83,217.75) .. (427.58,217.75) .. controls (426.34,217.75) and (425.33,216.74) .. (425.33,215.5) -- cycle ;
\draw    (517.5,281.67) -- (600.33,281.5) ;
\draw    (517.5,253) -- (599.83,253) ;
\draw  [fill={rgb, 255:red, 0; green, 0; blue, 0 }  ,fill opacity=1 ] (521.33,281.75) .. controls (521.33,280.51) and (522.34,279.5) .. (523.58,279.5) .. controls (524.83,279.5) and (525.83,280.51) .. (525.83,281.75) .. controls (525.83,282.99) and (524.83,284) .. (523.58,284) .. controls (522.34,284) and (521.33,282.99) .. (521.33,281.75) -- cycle ;
\draw  [fill={rgb, 255:red, 0; green, 0; blue, 0 }  ,fill opacity=1 ] (590.83,281.75) .. controls (590.83,280.51) and (591.84,279.5) .. (593.08,279.5) .. controls (594.33,279.5) and (595.33,280.51) .. (595.33,281.75) .. controls (595.33,282.99) and (594.33,284) .. (593.08,284) .. controls (591.84,284) and (590.83,282.99) .. (590.83,281.75) -- cycle ;
\draw    (629.01,229.91) .. controls (612.46,233.56) and (597.37,241.62) .. (590.67,258.5) ;
\draw    (559.83,258) -- (559.54,275) ;
\draw [shift={(559.5,277)}, rotate = 271.01] [color={rgb, 255:red, 0; green, 0; blue, 0 }  ][line width=0.75]    (10.93,-3.29) .. controls (6.95,-1.4) and (3.31,-0.3) .. (0,0) .. controls (3.31,0.3) and (6.95,1.4) .. (10.93,3.29)   ;
\draw    (560.33,226) -- (560.04,243) ;
\draw [shift={(560,245)}, rotate = 271.01] [color={rgb, 255:red, 0; green, 0; blue, 0 }  ][line width=0.75]    (10.93,-3.29) .. controls (6.95,-1.4) and (3.31,-0.3) .. (0,0) .. controls (3.31,0.3) and (6.95,1.4) .. (10.93,3.29)   ;
\draw  [color={rgb, 255:red, 208; green, 2; blue, 27 }  ,draw opacity=1 ][fill={rgb, 255:red, 208; green, 2; blue, 27 }  ,fill opacity=1 ] (626.03,230.59) .. controls (626.03,229.46) and (625.12,228.54) .. (624.01,228.54) .. controls (622.9,228.54) and (622,229.46) .. (622,230.59) .. controls (622,231.72) and (622.9,232.64) .. (624.01,232.64) .. controls (625.12,232.64) and (626.03,231.72) .. (626.03,230.59) -- cycle ;
\draw  [color={rgb, 255:red, 208; green, 2; blue, 27 }  ,draw opacity=1 ][fill={rgb, 255:red, 208; green, 2; blue, 27 }  ,fill opacity=1 ] (615.19,235.28) .. controls (615.19,234.15) and (614.29,233.23) .. (613.18,233.23) .. controls (612.07,233.23) and (611.16,234.15) .. (611.16,235.28) .. controls (611.16,236.42) and (612.07,237.34) .. (613.18,237.34) .. controls (614.29,237.34) and (615.19,236.42) .. (615.19,235.28) -- cycle ;
\draw  [color={rgb, 255:red, 208; green, 2; blue, 27 }  ,draw opacity=1 ][fill={rgb, 255:red, 208; green, 2; blue, 27 }  ,fill opacity=1 ] (605.75,241.89) .. controls (605.75,240.76) and (604.85,239.84) .. (603.73,239.84) .. controls (602.62,239.84) and (601.72,240.76) .. (601.72,241.89) .. controls (601.72,243.02) and (602.62,243.94) .. (603.73,243.94) .. controls (604.85,243.94) and (605.75,243.02) .. (605.75,241.89) -- cycle ;
\draw  [color={rgb, 255:red, 208; green, 2; blue, 27 }  ,draw opacity=1 ][fill={rgb, 255:red, 208; green, 2; blue, 27 }  ,fill opacity=1 ] (590.83,253.25) .. controls (590.83,252.01) and (591.84,251) .. (593.08,251) .. controls (594.33,251) and (595.33,252.01) .. (595.33,253.25) .. controls (595.33,254.49) and (594.33,255.5) .. (593.08,255.5) .. controls (591.84,255.5) and (590.83,254.49) .. (590.83,253.25) -- cycle ;
\draw   (619.91,134.7) .. controls (602.03,125.58) and (597.11,128.62) .. (597.71,150.06) .. controls (598.3,171.51) and (598.3,210.74) .. (598.15,218.96) .. controls (598.01,227.17) and (609.33,220.93) .. (621.7,214.09) .. controls (634.07,207.25) and (637.8,143.83) .. (619.91,134.7) -- cycle ;
\draw    (626.17,157.06) .. controls (615.89,163.9) and (615,165.27) .. (602.48,157.06) ;
\draw    (622.6,158.88) .. controls (613.65,154.32) and (614.55,153.86) .. (606.95,159.34) ;
\draw    (627.52,189.45) .. controls (617.23,196.3) and (616.34,197.66) .. (603.82,189.45) ;
\draw    (623.94,191.28) .. controls (615,186.71) and (615.89,186.26) .. (608.29,191.73) ;
\draw  [fill={rgb, 255:red, 0; green, 0; blue, 0 }  ,fill opacity=1 ] (595.08,178) .. controls (595.08,176.76) and (596.09,175.75) .. (597.33,175.75) .. controls (598.58,175.75) and (599.58,176.76) .. (599.58,178) .. controls (599.58,179.24) and (598.58,180.25) .. (597.33,180.25) .. controls (596.09,180.25) and (595.08,179.24) .. (595.08,178) -- cycle ;
\draw  [fill={rgb, 255:red, 0; green, 0; blue, 0 }  ,fill opacity=1 ] (595.58,146) .. controls (595.58,144.76) and (596.59,143.75) .. (597.83,143.75) .. controls (599.08,143.75) and (600.08,144.76) .. (600.08,146) .. controls (600.08,147.24) and (599.08,148.25) .. (597.83,148.25) .. controls (596.59,148.25) and (595.58,147.24) .. (595.58,146) -- cycle ;
\draw  [color={rgb, 255:red, 208; green, 2; blue, 27 }  ,draw opacity=1 ][fill={rgb, 255:red, 208; green, 2; blue, 27 }  ,fill opacity=1 ] (520.25,252.25) .. controls (520.25,251.01) and (521.26,250) .. (522.5,250) .. controls (523.74,250) and (524.75,251.01) .. (524.75,252.25) .. controls (524.75,253.49) and (523.74,254.5) .. (522.5,254.5) .. controls (521.26,254.5) and (520.25,253.49) .. (520.25,252.25) -- cycle ;
\draw   (518,146) .. controls (518,140.89) and (535.87,136.75) .. (557.92,136.75) .. controls (579.96,136.75) and (597.83,140.89) .. (597.83,146) .. controls (597.83,151.11) and (579.96,155.25) .. (557.92,155.25) .. controls (535.87,155.25) and (518,151.11) .. (518,146) -- cycle ;
\draw   (517.5,178) .. controls (517.5,172.89) and (535.37,168.75) .. (557.42,168.75) .. controls (579.46,168.75) and (597.33,172.89) .. (597.33,178) .. controls (597.33,183.11) and (579.46,187.25) .. (557.42,187.25) .. controls (535.37,187.25) and (517.5,183.11) .. (517.5,178) -- cycle ;
\draw   (518,210) .. controls (518,204.89) and (535.87,200.75) .. (557.92,200.75) .. controls (579.96,200.75) and (597.83,204.89) .. (597.83,210) .. controls (597.83,215.11) and (579.96,219.25) .. (557.92,219.25) .. controls (535.87,219.25) and (518,215.11) .. (518,210) -- cycle ;
\draw  [fill={rgb, 255:red, 0; green, 0; blue, 0 }  ,fill opacity=1 ] (595.58,210) .. controls (595.58,208.76) and (596.59,207.75) .. (597.83,207.75) .. controls (599.08,207.75) and (600.08,208.76) .. (600.08,210) .. controls (600.08,211.24) and (599.08,212.25) .. (597.83,212.25) .. controls (596.59,212.25) and (595.58,211.24) .. (595.58,210) -- cycle ;

\draw (34.5,283.4) node [anchor=north west][inner sep=0.75pt]  [font=\scriptsize]  {$0$};
\draw (108.5,283.5) node [anchor=north west][inner sep=0.75pt]  [font=\footnotesize] [align=left] {$\displaystyle \infty $};
\draw (129.5,274.4) node [anchor=north west][inner sep=0.75pt]  [font=\small]  {$\mathbb{P}^{1}$};
\draw (128,243.4) node [anchor=north west][inner sep=0.75pt]  [font=\small]  {$C$};
\draw (7.17,230.5) node [anchor=north west][inner sep=0.75pt]  [font=\footnotesize] [align=left] {{\scriptsize \textcolor[rgb]{0.82,0.01,0.11}{1}}};
\draw (15.25,234) node [anchor=north west][inner sep=0.75pt]  [font=\footnotesize] [align=left] {{\scriptsize \textcolor[rgb]{0.82,0.01,0.11}{2}}};
\draw (23.5,240) node [anchor=north west][inner sep=0.75pt]  [font=\footnotesize] [align=left] {{\scriptsize \textcolor[rgb]{0.82,0.01,0.11}{3}}};
\draw (30.83,244.17) node [anchor=north west][inner sep=0.75pt]  [font=\footnotesize] [align=left] {{\scriptsize \textcolor[rgb]{0.82,0.01,0.11}{4}}};
\draw (132.5,165.07) node [anchor=north west][inner sep=0.75pt]  [font=\small]  {$E$};
\draw (14,95.07) node [anchor=north west][inner sep=0.75pt]  [font=\small]  {$E_{0}$};
\draw (78,126.9) node [anchor=north west][inner sep=0.75pt]  [font=\tiny]  {$\mathbb{P}^{1}$};
\draw (77.5,142.9) node [anchor=north west][inner sep=0.75pt]  [font=\tiny]  {$\mathbb{P}^{1}$};
\draw (77.5,158.9) node [anchor=north west][inner sep=0.75pt]  [font=\tiny]  {$\mathbb{P}^{1}$};
\draw (78,172.4) node [anchor=north west][inner sep=0.75pt]  [font=\tiny]  {$\mathbb{P}^{1}$};
\draw (78,187.4) node [anchor=north west][inner sep=0.75pt]  [font=\tiny]  {$\mathbb{P}^{1}$};
\draw (78,202.4) node [anchor=north west][inner sep=0.75pt]  [font=\tiny]  {$\mathbb{P}^{1}$};
\draw (44,130.73) node [anchor=north west][inner sep=0.75pt]  [font=\tiny]  {$n_{1}$};
\draw (42.17,145.65) node [anchor=north west][inner sep=0.75pt]  [font=\tiny]  {$n_{2}$};
\draw (44.5,162.23) node [anchor=north west][inner sep=0.75pt]  [font=\tiny]  {$n_{3}$};
\draw (45,176.73) node [anchor=north west][inner sep=0.75pt]  [font=\tiny]  {$n_{4}$};
\draw (45,191.73) node [anchor=north west][inner sep=0.75pt]  [font=\tiny]  {$n_{5}$};
\draw (45,206.23) node [anchor=north west][inner sep=0.75pt]  [font=\tiny]  {$n_{6}$};
\draw (73,78.4) node [anchor=north west][inner sep=0.75pt]    {$\Gamma _{\phi }$};
\draw (190.33,283.4) node [anchor=north west][inner sep=0.75pt]  [font=\scriptsize]  {$0$};
\draw (264.33,283.5) node [anchor=north west][inner sep=0.75pt]  [font=\footnotesize] [align=left] {$\displaystyle \infty $};
\draw (285.33,274.4) node [anchor=north west][inner sep=0.75pt]  [font=\small]  {$\mathbb{P}^{1}$};
\draw (283.83,243.4) node [anchor=north west][inner sep=0.75pt]  [font=\small]  {$C$};
\draw (268.5,255.5) node [anchor=north west][inner sep=0.75pt]  [font=\footnotesize] [align=left] {{\scriptsize \textcolor[rgb]{0.82,0.01,0.11}{1}}};
\draw (163,230.5) node [anchor=north west][inner sep=0.75pt]  [font=\footnotesize] [align=left] {{\scriptsize \textcolor[rgb]{0.82,0.01,0.11}{2}}};
\draw (172.33,235.5) node [anchor=north west][inner sep=0.75pt]  [font=\footnotesize] [align=left] {{\scriptsize \textcolor[rgb]{0.82,0.01,0.11}{3}}};
\draw (181.17,241.17) node [anchor=north west][inner sep=0.75pt]  [font=\footnotesize] [align=left] {{\scriptsize \textcolor[rgb]{0.82,0.01,0.11}{4}}};
\draw (283.33,165.07) node [anchor=north west][inner sep=0.75pt]  [font=\small]  {$E$};
\draw (169.83,95.07) node [anchor=north west][inner sep=0.75pt]  [font=\small]  {$E_{0}$};
\draw (228.33,124.4) node [anchor=north west][inner sep=0.75pt]  [font=\tiny]  {$\mathbb{P}^{1}$};
\draw (229.33,171.9) node [anchor=north west][inner sep=0.75pt]  [font=\tiny]  {$\mathbb{P}^{1}$};
\draw (196.83,132.23) node [anchor=north west][inner sep=0.75pt]  [font=\tiny]  {$n_{1}$};
\draw (196,180.15) node [anchor=north west][inner sep=0.75pt]  [font=\tiny]  {$n_{2}$};
\draw (228.83,78.4) node [anchor=north west][inner sep=0.75pt]    {$\Gamma _{\{1\}}$};
\draw (342.33,283.4) node [anchor=north west][inner sep=0.75pt]  [font=\scriptsize]  {$0$};
\draw (416.33,283.5) node [anchor=north west][inner sep=0.75pt]  [font=\footnotesize] [align=left] {$\displaystyle \infty $};
\draw (437.33,274.4) node [anchor=north west][inner sep=0.75pt]  [font=\small]  {$\mathbb{P}^{1}$};
\draw (435.83,243.4) node [anchor=north west][inner sep=0.75pt]  [font=\small]  {$C$};
\draw (448.58,238.75) node [anchor=north west][inner sep=0.75pt]  [font=\footnotesize] [align=left] {{\scriptsize \textcolor[rgb]{0.82,0.01,0.11}{1}}};
\draw (458.08,235.5) node [anchor=north west][inner sep=0.75pt]  [font=\footnotesize] [align=left] {{\scriptsize \textcolor[rgb]{0.82,0.01,0.11}{2}}};
\draw (314.33,234.5) node [anchor=north west][inner sep=0.75pt]  [font=\footnotesize] [align=left] {{\scriptsize \textcolor[rgb]{0.82,0.01,0.11}{3}}};
\draw (326.67,236.67) node [anchor=north west][inner sep=0.75pt]  [font=\footnotesize] [align=left] {{\scriptsize \textcolor[rgb]{0.82,0.01,0.11}{4}}};
\draw (452.83,118.57) node [anchor=north west][inner sep=0.75pt]  [font=\small]  {$E_{\infty }$};
\draw (320.83,113.57) node [anchor=north west][inner sep=0.75pt]  [font=\small]  {$E_{0}$};
\draw (385.83,126.9) node [anchor=north west][inner sep=0.75pt]  [font=\tiny]  {$\mathbb{P}^{1}$};
\draw (385.33,142.9) node [anchor=north west][inner sep=0.75pt]  [font=\tiny]  {$\mathbb{P}^{1}$};
\draw (385.33,158.9) node [anchor=north west][inner sep=0.75pt]  [font=\tiny]  {$\mathbb{P}^{1}$};
\draw (385.83,172.4) node [anchor=north west][inner sep=0.75pt]  [font=\tiny]  {$\mathbb{P}^{1}$};
\draw (385.83,187.4) node [anchor=north west][inner sep=0.75pt]  [font=\tiny]  {$\mathbb{P}^{1}$};
\draw (385.83,202.4) node [anchor=north west][inner sep=0.75pt]  [font=\tiny]  {$\mathbb{P}^{1}$};
\draw (351.83,130.73) node [anchor=north west][inner sep=0.75pt]  [font=\tiny]  {$n_{1}$};
\draw (350,145.65) node [anchor=north west][inner sep=0.75pt]  [font=\tiny]  {$n_{2}$};
\draw (352.33,162.23) node [anchor=north west][inner sep=0.75pt]  [font=\tiny]  {$n_{3}$};
\draw (352.83,176.73) node [anchor=north west][inner sep=0.75pt]  [font=\tiny]  {$n_{4}$};
\draw (352.83,191.73) node [anchor=north west][inner sep=0.75pt]  [font=\tiny]  {$n_{5}$};
\draw (352.83,206.23) node [anchor=north west][inner sep=0.75pt]  [font=\tiny]  {$n_{6}$};
\draw (380.83,78.4) node [anchor=north west][inner sep=0.75pt]    {$\Gamma _{\{1,2\}}$};
\draw (415.83,127.73) node [anchor=north west][inner sep=0.75pt]  [font=\tiny]  {$n_{1} '$};
\draw (415.83,142.23) node [anchor=north west][inner sep=0.75pt]  [font=\tiny]  {$n_{2} '$};
\draw (416.83,156.73) node [anchor=north west][inner sep=0.75pt]  [font=\tiny]  {$n_{3} '$};
\draw (416.83,171.73) node [anchor=north west][inner sep=0.75pt]  [font=\tiny]  {$n_{4} '$};
\draw (415.83,187.73) node [anchor=north west][inner sep=0.75pt]  [font=\tiny]  {$n_{5} '$};
\draw (415.33,202.23) node [anchor=north west][inner sep=0.75pt]  [font=\tiny]  {$n_{6} '$};
\draw (511.83,284.9) node [anchor=north west][inner sep=0.75pt]  [font=\scriptsize]  {$0$};
\draw (585.83,285) node [anchor=north west][inner sep=0.75pt]  [font=\footnotesize] [align=left] {$\displaystyle \infty $};
\draw (606.83,275.9) node [anchor=north west][inner sep=0.75pt]  [font=\small]  {$\mathbb{P}^{1}$};
\draw (636.73,232.84) node [anchor=north west][inner sep=0.75pt]  [font=\small]  {$C$};
\draw (606.73,242.34) node [anchor=north west][inner sep=0.75pt]  [font=\footnotesize] [align=left] {{\scriptsize \textcolor[rgb]{0.82,0.01,0.11}{1}}};
\draw (614.16,236.78) node [anchor=north west][inner sep=0.75pt]  [font=\footnotesize] [align=left] {{\scriptsize \textcolor[rgb]{0.82,0.01,0.11}{2}}};
\draw (624,233.59) node [anchor=north west][inner sep=0.75pt]  [font=\footnotesize] [align=left] {{\scriptsize \textcolor[rgb]{0.82,0.01,0.11}{3}}};
\draw (519.5,256) node [anchor=north west][inner sep=0.75pt]  [font=\footnotesize] [align=left] {{\scriptsize \textcolor[rgb]{0.82,0.01,0.11}{4}}};
\draw (639.83,168.57) node [anchor=north west][inner sep=0.75pt]  [font=\small]  {$E$};
\draw (606.33,112.07) node [anchor=north west][inner sep=0.75pt]  [font=\small]  {$E_{\infty }$};
\draw (553.33,123.9) node [anchor=north west][inner sep=0.75pt]  [font=\tiny]  {$\mathbb{P}^{1}$};
\draw (585.83,130.23) node [anchor=north west][inner sep=0.75pt]  [font=\tiny]  {$n_{1}$};
\draw (550.33,79.9) node [anchor=north west][inner sep=0.75pt]    {$\Gamma _{\{1,2,3\}}$};
\draw (552.33,155.9) node [anchor=north west][inner sep=0.75pt]  [font=\tiny]  {$\mathbb{P}^{1}$};
\draw (585.33,162.23) node [anchor=north west][inner sep=0.75pt]  [font=\tiny]  {$n_{2}$};
\draw (552.33,187.9) node [anchor=north west][inner sep=0.75pt]  [font=\tiny]  {$\mathbb{P}^{1}$};
\draw (585.83,194.23) node [anchor=north west][inner sep=0.75pt]  [font=\tiny]  {$n_{3}$};

\end{tikzpicture}

    \caption{An example of the four types of fixed loci. We depicted the case $\Adm{2,4,3,3}{6}$ to illustrate the most complicated situation that may happen: in the fixed locus $\Gamma_{\{1,2\}}$ the curves contracting over $0$ and $\infty$ may be disconnected even though the cover $E$ is connected.}
    \label{fig:fixedloci}
\end{figure}

Observe that if $4\in I$, then $\Gamma_I\cap ev_4^\ast(c_1(\Ocal_{\Pro^1}(1))) = \Gamma_I\cap ev_4^\ast(0)= \phi$, and therefore such fixed loci do not contribute to the localization computation. For all remaining fixed loci, we have $ev_4^\ast(0)_{|\Gamma_I}  = c_1(\Ocal_{\Pro^1}(1)_{|0})= t$.

\subsubsection*{Normal bundles.} The Euler class of the normal bundle $e(N_{\Gamma_I})$ has two types of contributions, that may be described in terms of the geometry of the base curves parameterized by the fixed locus,  see Figure \ref{fig:fixedloci}: if there is a marked point or a component contracting at $p$, one of the fixed points of $\Pro^1$, then a normal direction to the fixed locus may be identified with $T_p\Pro^1$. If a component $\tilde{C}$ of $C$ contracts at $p$, then another normal direction is identified with the  deformation space of the node; denoting by $\tilde{p}\in \tilde{C}$ the shadow of the node in the normalization of the curve, the deformation space of the node is described as $T_p\Pro^1\boxtimes T_{\tilde p} \tilde{C}$.

\subsubsection*{Restriction of the integrand to fixed loci.} The computation of the restriction of the class 
$c_2(R^1\pi_\ast f^\ast(\Ocal_{\Pro^1}(-1)\boxtimes \Ocal_e))$ to a fixed locus $\Gamma_I$ is slightly more sophisticated: the fibers of this bundle over a nodal cover $E\to B$ may be analyzed using the normalization sequence. Rather than attempting a general discussion here, we  carry out such analysis explicitly for each fixed locus. Refer to Figure \ref{fig:fixedloci} where each type of fixed locus is depicted.

\vspace{0.2cm}\noindent{$\mathbf{\Gamma_\phi} \cong \Adm{m_1,m_2, m_3, m_4, 0}{d}$}. A general point in this fixed locus corresponds to 
a cover $F:E\to C$, where 
$C = \Pro^1 \cup \tilde{C}$ is a nodal curve with exactly two components, one of which is mapped with degree $1$ to $\Pro^1$ (and therefore denoted $\Pro^1$), while the other contracts over $0$ . The cover $E$ consists of a connected cyclic cover of $E_0 \to \tilde{C}$ and $d$ copies of $\Pro^1$ mapping with degree $1$ to $\Pro^1$. Tensoring the normalization sequence
\begin{equation}\label{eq:normseqempty}
0\to \Ocal_{E}\to \Ocal_{E_0}\oplus \bigoplus_{i=1}^d \Ocal_{\Pro^1}\to \bigoplus_{i=1}^d \C_{n_i} \to 0    
\end{equation}
by the invertible sheaf $f^\ast(\Ocal_{\Pro^1}(-1)\boxtimes \Ocal_e)$ and taking the long exact sequence in cohomology, we obtain:
\begin{equation} \label{Femptygib}
0\to  L_0 \to H^1(E, F^\ast(\Ocal_{\Pro^1}(-1))_{d-e} \to H^1(E_0, \Ocal)_{d-e} \to 0.    
\end{equation}
Globalizing the fiberwise computation in \eqref{Femptygib}, we obtain:
\begin{equation}\label{Femptygibglob}
    R^1\pi_\ast f^\ast(\Ocal_{\Pro^1}(-1)\boxtimes \Ocal_e))_{|\Gamma_\phi} = \left((\bE^\vee)_{d-e}\oplus L_0 \right).
\end{equation}
The contribution of $\Gamma_\phi$ to the localization computation of \eqref{eq:auxint} is then:
\begin{equation}
\Cont(\Gamma_\phi) = \int_{\Gamma_\phi} \frac{t\  c_2\left((\bE^\vee)_{d-e}\oplus L_0 \right)} {t(t-\psi_0)} = 0 
\end{equation}
where the vanishing holds because the
bundle $(\bE^\vee)_{d-e}$ has rank $1$.



\vspace{0.2cm}\noindent{$\mathbf{\Gamma_{\{j\}}} \cong \Adm{m_1,m_2, m_3, m_4}{d}$}. For a general point in this fixed locus, the base  $C = \Pro^1 \cup \tilde{C}$ is a nodal curve with exactly two components, as in the previous case. The cover $E$ consists of a connected cyclic cover of $E_0 \to \tilde{C}$ and $q_j := \gcd(m_j, d)$ copies of $\Pro^1$ mapping with degree $r_j:= d/\gcd(m_j, d)$ to $\Pro^1$. 
The normalization sequence
\begin{equation}
0\to \Ocal_{E}\to \Ocal_{E_0}\oplus \bigoplus_{i=1}^{q_j} \Ocal_{\Pro^1}\to  \bigoplus_{i=1}^{q_j} \C_{n_i} \to 0    
\end{equation}
tensored by $f^\ast(\Ocal_{\Pro^1}(-1)\boxtimes \Ocal_e)$ gives rise to the long exact sequence in cohomology: 
\begin{equation} \label{Fonegib}
0\to \left(\bigoplus_{i=1}^{q_j} L_0\right)_{d-e} \to H^1(E, F^\ast(\Ocal_{\Pro^1}(-1)))_{d-e} \to H^1(E_0, \Ocal)_{d-e} 
\oplus \left(\bigoplus_{i=1}^{q_j} H^1(\Pro^1, \Ocal_{\Pro^1}(-r_j))
\right)_{d-e}
\to 0.
\end{equation}
One must consider two cases. If $d$ divides $q_je$, \eqref{Fonegib} reduces to:
\begin{equation} \label{Fonediv}
 0\to  L_0 \to H^1(E, F^\ast(\Ocal_{\Pro^1}(-1)))_{d-e} \to H^1(E_0, \Ocal)_{d-e} \to 0;   
\end{equation}
If $d\not|\ qe$, then we have:
\begin{equation}\label{Fonenotdiv}
 0\to  H^1(E, F^\ast(\Ocal_{\Pro^1}(-1)))_{d-e} \to H^1(E_0, \Ocal)_{d-e} 
\oplus L_{\left\langle\frac{em_j }{d}\right\rangle}
\to 0.
\end{equation}
In both cases one may deduce:

\begin{equation}
    R^1\pi_\ast f^\ast(\Ocal_{\Pro^1}(-1)\boxtimes \Ocal_e))_{|\Gamma_{\{j\}}} = \left((\bE^\vee)_{d-e} \right) \oplus L_{\left\langle\frac{em_j }{d}\right\rangle}.
\end{equation}
The contribution of $\Gamma_{\{j\}}$ to the localization computation of \eqref{eq:auxint} is then:
\begin{equation}
\Cont(\Gamma_{\{j\}}) = \int_{\Gamma_{\{j\}}} \frac{t\  c_2(\left((\bE^\vee)_{d-e} \right) \oplus L_{\left\langle\frac{em_j }{d}\right\rangle})} {-t^2(t-\psi_0)} = \frac{\left\langle\frac{m_j e}{d}\right\rangle}{t}\int_{\Adm{m_1,m_2, m_3, m_4}{d}} \lambda_1^e.
\end{equation}


\vspace{0.2cm}\noindent{$\mathbf{\Gamma_{\{1,2,3\}}} \cong \Adm{m_1,m_2, m_3, m_4}{d}$}. The analysis for this fixed locus is very similar to the previous case, the main difference being that the contracting component $E_\infty$ now lies over $\infty\in \Pro^1$.
Denoting $q_4 := \gcd(m_4, d)$ and $r_4:= d/\gcd(m_4, d)$, the long exact sequence in cohomology is then:
\begin{equation} \label{Fthreegib}
0\to \left(\bigoplus_{i=1}^{q_4} L_1\right)_{d-e}  \to H^1(E, F^\ast(\Ocal_{\Pro^1}(-1)))_{d-e} \to H^1(E_\infty, \Ocal)_{d-e}\otimes L_1 
\oplus \left(\bigoplus_{i=1}^{q_4} H^1(\Pro^1, \Ocal_{\Pro^1}(-r_4))
\right)_{d-e}
\to 0.
\end{equation}
As in the previous case one must analyse separately the cases when $d$ does or doesn't divide $q_4e$, but in both cases one may write:
\begin{equation}
    R^1\pi_\ast f^\ast(\Ocal_{\Pro^1}(-1)\boxtimes \Ocal_e))_{|\Gamma_{\{1,2,3\}}} = \left((\bE^\vee)_{d-e} \right)\otimes L_1 \oplus L_{1-\left\langle\frac{em_4 }{d}\right\rangle}.
\end{equation}
The contribution of $\Gamma_{\{1,2,3\}}$ to the localization computation of \eqref{eq:auxint} is then:
\begin{equation}
\Cont(\Gamma_{\{1,2,3\}}) = \int_{\Gamma_{\{1,2,3\}}} \frac{t\  c_2(\left((\bE^\vee)_{d-e} \right)\otimes L_1 \oplus L_{1-\left\langle\frac{m_4 e}{d}\right\rangle})} {t^2(t+\psi_\infty)} = \frac{\left\langle\frac{m_4 e}{d}\right\rangle-1}{t}\int_{\Adm{m_1,m_2, m_3, m_4}{d}} (\lambda_1^e +\psi_\infty).
\end{equation}


\vspace{0.2cm}\noindent{$\mathbf{\Gamma_{\{i,j\}}} \cong  d \cdot \Adm{m_k,m_4, 2d-m_k-m_4}{d}\times \Adm{m_i,m_j, d-m_i-m_j}{d} $}. These are zero dimensional fixed loci parameterizing covers with contracting components over both $0$ and $\infty$, so the only non-zero contributions in the localization computation arise from integrating classes which are multiples of some power of the equivariant parameter.
However, because we are in the numerical situation $\left\langle\frac{em_1}{d}\right\rangle +\left\langle\frac{em_4}{d}\right\rangle > 1$,  we can conclude from orbifold Riemann-Roch (Theorem \ref{thm:orroch}) that for
every choice of $k\in \{1,2,3\}$, $p_1^\ast(\bE^\vee)_{d-e}$ has rank one. Analyzing the long exact sequence in cohomology arising from tensoring the normalization sequence with $f^\ast(\Ocal_{\Pro^1}(-1)\boxtimes \Ocal_e)$, one sees that $p_1^\ast(\bE^\vee)_{d-e}$ is a summand for the rank two bundle $ R^1\pi_\ast f^\ast(\Ocal_{\Pro^1}(-1)\boxtimes \Ocal_e))_{|\Gamma_{\{i,j\}}}$. It follows that $c_2(R^1\pi_\ast f^\ast(\Ocal_{\Pro^1}(-1)\boxtimes \Ocal_e))_{|\Gamma_{\{i,j\}}})$ is a multiple of $\lambda_1^e$ and therefore it has no term which is a pure multiple of $t^2$, forcing $\Cont(\Gamma_{\{i,j\}}) = 0$.

\subsubsection*{Evaluation of auxiliary integral.} Adding all contributions (and ignoring the global factor of $1/t$), we obtain:
\begin{eqnarray}
0 & =& \sum_{I\in [3]} \Cont(\Gamma_I) \nonumber \\
 & =&  \int_{\Adm{m_1,m_2, m_3, m_4}{d}}  \left(\sum_{j=1}^3 \left\langle\frac{em_j}{d}\right\rangle \lambda_1^e\right) + \left(
\left\langle\frac{em_4 }{d}\right\rangle-1 \right)(\lambda_1^e +\psi_\infty) \nonumber \\
 &= & \left(  \int_{\Adm{m_1,m_2, m_3, m_4}{d}} \lambda_1^e \right) + \frac{\left\langle\frac{em_4 }{d}\right\rangle-1}{d},
\end{eqnarray}
where we have used that $\sum_{j=1}^4 \left\langle\frac{em_j}{d}\right\rangle = 2$ and that the degree of the class  $\psi_{\infty}$ is $1/d$. The result follows immediately solving for  the degree of $\lambda_1^e$.
\end{proof}

\section{The first Chern class of the full Hodge bundle $\bE$}
\label{sec:non-orb}

\subsection{The one dimensional case} In this section we prove Theorem \ref{thm:dimonenotorb}. While in principle one may use the relation
$$
\lambda_1 = \sum_{e=1}^{d-1}\lambda_1^e
$$
and Theorem \ref{thm:orbla} to compute the degree of $\lambda_1$, we found it more effective to use the Atiyah-Bott localization theorem. Before we begin the computation, we rephrase the statement of Theorem \ref{thm:dimonenotorb} in a less symmetric but more compact way.

\begin{corol}
The degree of $\lambda_1$ on the space $\Adm{m_1, m_2, m_3, m_4}{d}$ may be expressed  as:
\begin{equation}\label{eq:simpler}
\int_{\Adm{m_1, m_2, m_3, m_4}{d}}\lambda_1 = \frac{1}{12d^2} \left( d^2 - \sum_{i=1}^4{\gcd}^2\left(m_i, d\right)+\sum_{i=1}^3 {\gcd}^2\left(m_i+m_4, d\right)\right).
\end{equation}
\end{corol}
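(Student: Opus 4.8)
The plan is to establish the compact identity \eqref{eq:simpler} directly by Atiyah--Bott localization, in complete analogy with the proof of Theorem \ref{thm:orbla}, the one structural change being that we carry the \emph{full} higher direct image sheaf in place of a single eigen-summand; Theorem \ref{thm:dimonenotorb} then follows from \eqref{eq:simpler} by elementary arithmetic. That arithmetic is the only place the symmetric form is used: since $\gcd(0,d)=d$ and $\sum_i m_i\equiv 0\pmod d$ forces $\gcd(\sum_{i\in I}m_i,d)=\gcd(\sum_{i\in I^c}m_i,d)$, complementary subsets contribute the same $\gcd^2$ with the same sign, so pairing $\phi$ with $[4]$, each singleton $\{i\}$ with its complement, and the three doubletons with their complements turns \eqref{eq:onedim} into $\tfrac{1}{24 d^2}\big(2d^2-2\sum_{i=1}^4\gcd^2(m_i,d)+2\sum_{i=1}^3\gcd^2(m_i+m_4,d)\big)$, which is \eqref{eq:simpler}.

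For the localization, work on the $4$-dimensional space $\Adm{\Pro^1| m_1,m_2,m_3,m_4}{d}$ of admissible covers of a parameterized $\Pro^1$, with the $\C^\ast$-action induced from $\Pro^1$, and consider the vanishing integral
\[
\int_{\Adm{\Pro^1| m_1,m_2,m_3,m_4}{d}} ev_4^\ast\big(c_1(\Ocal_{\Pro^1}(1))\big)\cdot c_2\big(R^1\pi_\ast f^\ast(\Ocal_{\Pro^1}(-1)\boxtimes\Ocal_0)\big)=0,
\]
which vanishes because it has degree $3$ on a space of dimension $4$. Choose the linearizations $(1,0)$ on $\Ocal_{\Pro^1}(1)$ and $(0,1)$ on $\Ocal_{\Pro^1}(-1)$ exactly as in the proof of Theorem \ref{thm:orbla}. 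The fixed loci $\Gamma_I$ are indexed by $I\in\Pcal([4])$ (marked points in $I$ over $\infty$, the rest over $0$), and those with $4\in I$ are killed against $ev_4^\ast$, leaving the eight loci $\Gamma_I$ with $I\subseteq\{1,2,3\}$.

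The substantive step is the restriction of $c_2\big(R^1\pi_\ast f^\ast(\Ocal_{\Pro^1}(-1)\boxtimes\Ocal_0)\big)$ to each $\Gamma_I$, obtained by tensoring the normalization sequence of the cover curve by $f^\ast(\Ocal_{\Pro^1}(-1)\boxtimes\Ocal_0)$ and passing to the long exact sequence in cohomology, just as in the orbifold case. Over the one-dimensional loci $\Gamma_\phi\cong\Adm{m_1,m_2,m_3,m_4,0}{d}$, $\Gamma_{\{j\}}\cong\Adm{m_1,m_2,m_3,m_4}{d}$ ($j\in\{1,2,3\}$) and $\Gamma_{\{1,2,3\}}\cong\Adm{m_1,m_2,m_3,m_4}{d}$ the bundle splits as $\bE^\vee$ (pulled back from a smaller admissible cover space, twisted by $L_1$ in the $\Gamma_{\{1,2,3\}}$ case) plus a direct sum of weight line bundles $\bigoplus L_{a_i}$ coming from the groups $H^1(\Pro^1,\Ocal_{\Pro^1}(-r))$ of the contracted ramified rational components of the cover and from the twisted nodes; over the three zero-dimensional loci $\Gamma_{\{i,j\}}$ with $i,j\in\{1,2,3\}$ the restriction is a pure sum of weight line bundles. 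The essential difference from Theorem \ref{thm:orbla} is that $\bE^\vee$ now has rank $g$ rather than $1$, and each ramified component occurs $q=\gcd(\cdot,d)$ times, contributing $H^1(\Pro^1,\Ocal_{\Pro^1}(-r))^{\oplus q}$; Lemma \ref{lem:chern} is exactly the tool for computing $c_2$ of these direct sums, which besides the linear term $-(\sum a_i)\lambda_1$ produces a purely equivariant term whose $\binom{q}{2}c_1^2$ part is where the $\gcd^2$'s are born. Assembling: $\Gamma_\phi$ again contributes $0$ (there $\bE^\vee$ is pulled back from a one-dimensional base, so $c_2(\bE^\vee)=0$, and the accompanying line bundles carry weight $0$); $\Gamma_{\{1\}},\Gamma_{\{2\}},\Gamma_{\{3\}},\Gamma_{\{1,2,3\}}$ contribute $\int\lambda_1$ with coefficients that again sum to $1$, together with $\psi$-integral terms evaluated via $\int\psi=1/d$; and the $\Gamma_{\{i,j\}}$ contribute pure numbers which, after using $\gcd(m_i+m_j,d)=\gcd(m_k+m_4,d)$ for $\{i,j,k\}=\{1,2,3\}$, organize into $\sum_{i=1}^3\gcd^2(m_i+m_4,d)$ together with the $d^2$ and $\sum_{i=1}^4\gcd^2(m_i,d)$ pieces. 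Setting the total to zero and solving for $\int\lambda_1$ yields \eqref{eq:simpler}.

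I expect the main obstacle to be the explicit determination of the weights $a_i$ of the line-bundle summands in the fixed-locus restrictions --- the orbifold Riemann--Roch bookkeeping for $H^1$ of the contracted ramified components and the twisted nodes --- together with the resummation of the resulting rational weights, through the $c_2$ of the direct-sum decompositions, into the closed $\gcd^2$-expressions. In particular, in contrast with the proof of Theorem \ref{thm:orbla}, the zero-dimensional boundary loci $\Gamma_{\{i,j\}}$ no longer vanish, and it is precisely their contribution that produces the $\gcd^2(m_i+m_4,d)$ terms; checking that everything collapses to the stated closed form is the book-keeping core of the argument.
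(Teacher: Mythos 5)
Your opening arithmetic --- pairing each $I$ with $I^c$, using $\gcd\left(\sum_{i\in I}m_i,d\right)=\gcd\left(\sum_{i\in I^c}m_i,d\right)$ and $\gcd(0,d)=d$ --- is precisely the paper's entire proof of this corollary: \eqref{eq:simpler} is obtained from \eqref{eq:onedim} by taking one representative of each complementary pair and doubling. Everything after your first paragraph is therefore really a (re)proof of Theorem \ref{thm:dimonenotorb}, and in substance it coincides with the paper's: same space $\Adm{\Pro^1| m_1, m_2,m_3, m_4}{d}$, same linearizations and fixed loci, vanishing of $\Gamma_\phi$, $\lambda_1$-coefficients from $\Gamma_{\{1\}},\Gamma_{\{2\}},\Gamma_{\{3\}},\Gamma_{\{1,2,3\}}$ summing to $1$, and nonvanishing zero-dimensional loci $\Gamma_{\{i,j\}}$ supplying the $\gcd^2(m_i+m_4,d)$ terms after Riemann--Hurwitz substitutions.

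There is, however, one concrete error to fix in the auxiliary integral: you wrote $c_2\bigl(R^1\pi_\ast f^\ast(\Ocal_{\Pro^1}(-1)\boxtimes\Ocal_0)\bigr)$, i.e.\ a push-forward from the universal twisted \emph{base} curve of the orbifold bundle with $e=0$. By \eqref{eq:unravel} this is exactly a single eigen-summand, namely the invariant part $\bigl(H^1(E,F^\ast\Ocal_{\Pro^1}(-1))\bigr)_0$, not the ``full higher direct image'' you announce. If taken literally the localization degenerates: on the fixed loci the Hodge-type summand is $(\bE^\vee)_0\cong(\bE_0)^\vee$, which has rank zero for covers of rational curves, and all ages $\left\langle 0\cdot m_j/d\right\rangle$ vanish, so every restricted $c_2$ is zero and the relation reads $0=0$, giving no information about $\lambda_1=c_1(\bE)$. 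To carry the full bundle you must push forward from the universal cover curve, i.e.\ use $c_2\bigl(R^1\Pi_\ast F^\ast(\Ocal_{\Pro^1}(-1))\bigr)$ as in \eqref{eq:auxintnotorb}; with that replacement your description of the fixed-locus restrictions --- the rank-$g$ bundle $\bE^\vee$ (twisted by $L_1$ over $\infty$), the $H^1(\Pro^1,\Ocal_{\Pro^1}(-r))^{\oplus q}$ weight bundles handled via Lemma \ref{lem:chern}, and the weight bookkeeping you correctly identify as the computational core --- is exactly the paper's computation, and the argument goes through.
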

\begin{proof}
Since $m_1+m_2+m_3+m_4 = 0\in \Z/d\Z$ we have that  ${\gcd}\left(\sum_{i\in I}m_i, d\right) = {\gcd}\left(\sum_{i\in I^c}m_i, d\right)$. Formula \eqref{eq:simpler} is obtained from \eqref{eq:onedim} by taking only one representative for each pair $I, I^c$ and doubling its contribution. 
\end{proof}

We also specialize the theorem to the case of prime degree as the result is especially elegant in this case.
\begin{corol}
If $d = p$ is a prime number, then the degree of $\lambda_1$ on the space $\Adm{m_1, m_2, m_3, m_4}{p}$ is a rational function of $p$. Precisely:
\begin{equation}\label{eq:prime}
\int_{\Adm{m_1, m_2, m_3, m_4}{p}}\lambda_1 =
\left\{
\begin{array}{cl}
0 & \mbox{if}\  0\in \{m_1, m_2, m_3, m_4\} ,\\ & \\
\frac{p^2-1}{12p^2} & \mbox{if}\  m_i+m_j \not= 0\in \Z/p\Z \  \mbox{for all $i,j$},\\
& \\
\frac{p^2-1}{6p^2} & \mbox{if}\  \{m_1, m_2, m_3, m_4\}  = \{i, p-i, j, p-j\},\ \mbox{all distinct},\\ & \\
\frac{p^2-1}{4p^2} & \mbox{if}\  \{m_1, m_2, m_3, m_4\}  = \{i, p-i\}.
\end{array}
\right.
\end{equation}
\end{corol}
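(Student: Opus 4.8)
The plan is to specialize Theorem~\ref{thm:dimonenotorb}, in the more economical form \eqref{eq:simpler}, to $d=p$ prime, and to read off the four cases according to the divisibility by $p$ of the subset sums that occur. The only arithmetic input is that for a prime $p$ one has $\gcd(a,p)\in\{1,p\}$, with $\gcd(a,p)=p$ exactly when $p\mid a$, so each $\gcd^2(\cdot,p)$ in \eqref{eq:simpler} equals $1$ or $p^2$; moreover, since $0\le m_i<p$ we have $0<m_i+m_j<2p$ for $i\ne j$, so $p\mid m_i+m_j$ if and only if $m_i+m_j=p$. The casework below is carried out for $p$ odd, so that $i\ne p-i$ and the four situations in \eqref{eq:prime} are genuinely distinct.

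First I would treat the case $0\in\{m_1,m_2,m_3,m_4\}$, say $m_4=0$ (connectivity $\gcd(m_1,\dots,m_4,p)=1$ then just says not every $m_i$ vanishes). Here $\gcd(m_4,p)=p$ while $\gcd(m_i+m_4,p)=\gcd(m_i,p)$ for $i=1,2,3$, so the two sums in \eqref{eq:simpler} cancel and the integral is $0$, matching the first line of \eqref{eq:prime}.

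From now on all $m_i$ are nonzero, hence coprime to $p$, so $\gcd^2(m_i,p)=1$ and \eqref{eq:simpler} becomes $\int\lambda_1=\frac{1}{12p^2}\bigl(p^2-4+\sum_{i=1}^3\gcd^2(m_i+m_4,p)\bigr)$; everything now hinges on how many of $m_1+m_4,\,m_2+m_4,\,m_3+m_4$ are divisible by $p$. Call an unordered pair $\{i,j\}$ a \emph{zero-pair} if $m_i+m_j\equiv 0\pmod p$, equivalently $m_j=p-m_i$. Since $\sum m_i\equiv 0$, the complement of a zero-pair is again a zero-pair, so the zero-pairs form a union of complementary couples among $\{1,2\}\,/\,\{3,4\}$, $\{1,3\}\,/\,\{2,4\}$, $\{1,4\}\,/\,\{2,3\}$; and $p$ odd forbids $m_i+m_i=2m_i\equiv 0$, hence it forbids all six pairs being zero-pairs. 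Thus the number of zero-pairs is $0$, $2$, or $4$, and a short analysis of the overlaps of the relations $m_j\equiv -m_i$ shows that these correspond exactly to case~2 ($m_i+m_j\ne 0\in\Z/p\Z$ for all $i,j$), case~3 ($\{m_1,\dots,m_4\}=\{i,p-i,j,p-j\}$ with all four values distinct, the two zero-pairs forming a perfect matching), and case~4 ($\{m_1,\dots,m_4\}=\{i,p-i\}$, each value with multiplicity two). In these three cases the index $4$ lies in $0$, $1$, respectively $2$ zero-pairs, so $\sum_{i=1}^3\gcd^2(m_i+m_4,p)$ equals $3$, $p^2+2$, respectively $2p^2+1$, and substituting into the displayed formula yields $\frac{p^2-1}{12p^2}$, $\frac{p^2-1}{6p^2}$, and $\frac{p^2-1}{4p^2}$, as claimed.

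I expect the only genuine work to be the bookkeeping in the last step: verifying that, with every $m_i$ nonzero, exactly $2$ zero-pairs forces the ``$\{i,p-i,j,p-j\}$ distinct'' pattern and exactly $4$ forces the ``$\{i,p-i\}$ doubled'' pattern, and that the number of zero-pairs through the distinguished index --- which is what \eqref{eq:simpler} records --- is independent of how the four monodromies are labeled (it is, because in case~3 the zero-pairs form a perfect matching and in case~4 every index meets both zero-couples). All of this is elementary, and once it is in place the four lines of \eqref{eq:prime} follow by direct substitution.
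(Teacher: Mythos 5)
Your proof is correct and is exactly the argument the paper intends: the corollary is stated there without its own proof, as a direct specialization of Theorem \ref{thm:dimonenotorb} in the form \eqref{eq:simpler}, and your zero-pair bookkeeping supplies precisely the elementary case analysis needed. Your restriction to odd $p$ is also the right call: for $p=2$ (necessarily $m_i=1$ for all $i$ when no $m_i$ vanishes) the theorem gives $1/4$ rather than $(p^2-1)/(4p^2)=3/16$, so the fourth line of \eqref{eq:prime} must be read with $i\neq p-i$, i.e.\ the corollary implicitly assumes $p$ odd there.
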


\begin{proof}[Proof of Theorem \ref{thm:dimonenotorb}]
Consider the auxiliary vanishing integral:
\begin{equation} \label{eq:auxintnotorb}
    \int_{\Adm{\Pro^1| m_1, m_2,m_3, m_4}{d}} ev_4^\ast(c_1(\Ocal_{\Pro^1}(1))) \cdot c_2(R^1\Pi_\ast F^\ast(\Ocal_{\Pro^1}(-1))) = 0;
\end{equation}
We adopt the same localization set-up as in Section \ref{loc:orb}, with the only difference that the line bundle $\Ocal_{\Pro^1}(-1)$ is pull-pushed via the morphisms from the universal cover curve $\Ucal_E$. The fixed loci, their normal bundles, and the restriction of the class $ev_4^\ast(c_1(\Ocal_{\Pro^1}(1)))$ are the same as in Section \ref{loc:orb}. Here we analyze the restrictions of the class $c_2(R^1\Pi_\ast F^\ast(\Ocal_{\Pro^1}(-1)))$. These computations have been coded and the code is available upon request.

\vspace{0.2cm}\noindent{$\mathbf{\Gamma_\phi} \cong \Adm{m_1,m_2, m_3, m_4, 0}{d}$}.  Tensoring the normalization sequence \eqref{eq:normseqempty}
by the invertible sheaf $F^\ast(\Ocal_{\Pro^1}(-1))$ and taking the long exact sequence in cohomology, we obtain:
\begin{equation} \label{Femptygibnotorb}
0\to  L_0 \to  L_0^{\oplus d} \to H^1(E, F^\ast(\Ocal_{\Pro^1}(-1))) \to H^1(E_0, \Ocal) \to 0.    
\end{equation}
Globalizing the fiberwise computation in \label{Femptygibglobnotorb}, we obtain:
\begin{equation}
   c_2( R^1\Pi_\ast F^\ast(\Ocal_{\Pro^1}(-1))_{|\Gamma_\phi}) = c_2(\left((\bE^\vee)\oplus \Ocal^{\oplus d-1}  \right)) = \lambda_2.
\end{equation}
The contribution of $\Gamma_\phi$ to the localization computation of \eqref{eq:auxintnotorb} is then:
\begin{equation}
\Cont(\Gamma_\phi) = \int_{\Gamma_\phi} \frac{t\  \lambda_2} {t(t-\psi_0)} = 0 
\end{equation}
where the vanishing holds by the projection formula because the
class $\lambda_2$ is obtained by pull-back of the homonymous class via the forgetful morphism $\pi_0:\Adm{m_1,m_2, m_3, m_4, 0}{d}\to \Adm{m_1,m_2, m_3, m_4}{d}$.


\vspace{0.2cm}\noindent{$\mathbf{\Gamma_{\{j\}}} \cong \Adm{m_1,m_2, m_3, m_4}{d}$}. Denoting $q_j = \gcd (m_j,d)$ the number of nodes over $0\in \Pro^1$ and $r_j = d/q_j$,  the relevant  long exact sequence in cohomology is: 
\begin{equation} \label{Fonegibnotorb}
0\to L_0\to \bigoplus_{i=1}^{q_j} L_0  \to H^1(E, F^\ast(\Ocal_{\Pro^1}(-1))) \to H^1(E_0, \Ocal) 
\oplus \bigoplus_{i=1}^{q_j} H^1(\Pro^1, \Ocal_{\Pro^1}(-r_j))
\to 0.
\end{equation}

Globally one observes that the bundles with fiber $ H^1(\Pro^1, \Ocal_{\Pro^1}(-r_j))$ are trivial, but not equivariantly trivial. Computing the torus weights (see \cite{r:adm}, for example)
one obtains
$$ H^1(\Pro^1, \Ocal_{\Pro^1}(-r_j)) = \bigoplus_{i=1}^{r_j-1} L_{\frac{i}{r_j}} $$
Globally, 
\begin{equation}\label{eq:fflnotorb}
    R^1\Pi_\ast F^\ast(\Ocal_{\Pro^1}(-1))_{|\Gamma_{\{j\}}} = (L_0^{\oplus q-1}\oplus \bE^\vee) \oplus \left(\bigoplus_{i=1}^{r_j-1} L_{\frac{i}{r_j}}\right)^{\oplus q_j}.
\end{equation}
The second Chern class of the bundle in \eqref{eq:fflnotorb} is computed using the formal properties of Chern classes from Section \ref{sec:chern}. Since $c^{eq}_t(L_0) = 1$ and we will be integrating on a one dimensional space, the only contributing part of $c_2$ is given by
\begin{equation}
  c_1(\bE^\vee) c_1\left(\left(\bigoplus_{i=1}^{r_j-1} L_{\frac{i}{r_j}}\right)^{\oplus q_j}\right)  + c_2\left(\left(\bigoplus_{i=1}^{r_j-1} L_{\frac{i}{r_j}}\right)^{\oplus q_j}\right).
\end{equation}
One computes
\begin{equation} \label{eq:weightsedgeterms}
 c_1\left(\bigoplus_{i=1}^{r_j-1} L_{\frac{i}{r_j}}\right)   = \frac{r_j-1}{2}t\ \  \ \ \  c_2\left(\bigoplus_{i=1}^{r_j-1} L_{\frac{i}{r_j}}\right) = \frac{(r_j-1)(r_j-2)(3r_j-1)}{24r_j} t^2.
\end{equation}
Using Lemma \ref{lem:chern} and the relation $qr=d$, one then evaluates \eqref{eq:fflnotorb} to:
\begin{equation}
  C_j:=-\frac{d-q_j}{2}\lambda_1 t +  \frac{(d - q_j)(3d^2 - 3dq_j - 4d + 2q_j)}{24d}t^2
\end{equation}

The contribution of $\Gamma_{\{i\}}$ to the localization computation of \eqref{eq:auxintnotorb} is then:
\begin{equation}
\Cont(\Gamma_{\{j\}}) = \int_{\Gamma_{\{j\}}} \frac{t\  C_j} {-t^2(t-\psi_0)} = \frac{1}{t}\left[\left(\frac{d-q_j}{2}\int_{\Adm{m_1,m_2, m_3, m_4}{d}} \lambda_1\right)-  \frac{(d - q_j)(3d^2 - 3dq_j - 4d + 2q_j)}{24d^2} \right].\end{equation}


\vspace{0.2cm}\noindent{$\mathbf{\Gamma_{\{1,2,3\}}} \cong \Adm{m_1,m_2, m_3, m_4}{d}$}. This fixed locus parameterizes curves with one component contracting over $\infty\in \Pro^1$. We denote $q_4 = \gcd(m_4,d)$  the number of nodes over $\infty$ and $r_4 = d/q_4$.
The long exact sequence in cohomology is:
\begin{equation} \label{Fthreegibnotorb}
0\to L_1 \to \bigoplus_{i=1}^{q_4}  L_1 \to H^1(E, F^\ast(\Ocal_{\Pro^1}(-1)) \to H^1(E_\infty, \Ocal)\otimes L_1 
\oplus \bigoplus_{i=1}^{q_4} H^1(\Pro^1, \Ocal_{\Pro^1}(-r_4))
\to 0.
\end{equation}
Globally one obtains:
\begin{equation} \label{eq:bundlenotorbthirdfixed}
    R^1\Pi_\ast F^\ast(\Ocal_{\Pro^1}(-1))_{|\Gamma_{\{1,2,3\}}} = (L_1^{ q_4-1}\oplus (\bE^\vee\otimes L_1)) \oplus \left(\bigoplus_{i=1}^{r_4-1} L_{\frac{i}{r_4}}\right)^{\oplus q_4}.
\end{equation}

The bundle $L_1^{ q_4-1}\oplus (\bE^\vee\otimes L_1) $ has $q_4-1$ Chern roots equal to $t$, plus the Chern roots $-\alpha_i+t$, where $\alpha_i$ is a Chern root of the Hodge bundle $\bE$. It follows that:
\begin{equation} \label{eq:coneandtwoforEtwited}
    c_1(L_1^{ q_4-1}\oplus \bE^\vee\otimes L_1) = -\lambda_1 +(g+q_4-1) t \ \ \ \  c_2(L_1^{ q_4-1}\oplus \bE^\vee\otimes L_1 ) = {{g+q_4-1}\choose{2}}t^2
-(g+q_4-2)t \lambda_1 +\lambda_2. \end{equation}

Combining \eqref{eq:weightsedgeterms} and \eqref{eq:coneandtwoforEtwited}, and neglecting the $\lambda_2$ term which will not survive integration, one has that the relevant part of the second Chern class of \eqref{eq:bundlenotorbthirdfixed} is 
\begin{equation}
    C_{1,2,3}:= 
    \frac{4-d-q_j-2g}{2}\lambda_1t+ \frac{3d^3 + 12gd^2 + 6d^2q_4 - 16d^2 + 12g^2d + 12gdq_4 + 3{q_4}^2d - 36gd - 18dq_4 + 24d - 2{q_4}^2}{24{d}}t^2
\end{equation}

The contribution of $\Gamma_{\{1,2,3\}}$ to the localization computation of \eqref{eq:auxintnotorb} is then:
\begin{equation*}
\Cont(\Gamma_{\{1,2,3\}}) = \int_{\Gamma_{\{1,2,3\}}} \frac{t\  C_{1,2,3}} {t^2(t+\psi_\infty)} = \frac{1}{t}\left[ \frac{4-d-q_4-2g
}{2}\int_{\Adm{m_1,m_2, m_3, m_4}{d}} \lambda_1^e \right.
\end{equation*}
\begin{equation}\left.
- \frac{3d^3 + 12gd^2 + 6d^2q_4 - 16d^2 + 12g^2d + 12gdq_4 + 3{q_4}^2d - 36gd - 18dq_4 + 24d - 2{q_4}^2}{24d^2}
\right].
\end{equation}


\vspace{0.2cm}\noindent{$\mathbf{\Gamma_{\{i,j\}}} \cong  d \cdot \Adm{m_k,m_4, 2d-m_k-m_4}{d}\times \Adm{m_i,m_j, d-m_i-m_j}{d} $}.
These are zero dimensional fixed loci parameterizing covers with contracting components over both $0$ and $\infty$. There are several discrete invariants associated to this fixed locus; we recall those that are used in the computation: there are $\gcd(m_i,m_j,d)$ components of the cover contracting to $\infty\in \Pro^1$; we denote by $g_{ij}$ the genus of the (possibly disconnected) cover contracting to $\infty\in\Pro^1$; we denote $q_{ij} = \gcd(m_i+m_j,d)$ the number of rational components of the cover mapping onto $\Pro^1$, and $r_{ij} = d/q_{ij}$. Since the only non-zero contributions in the localization computation arise from integrating classes which are multiples of some power of the equivariant parameter, we may simplify the normalization sequence by setting all geometric Chern roots of the Hodge bundles appearing to $0$, to obtain:

\begin{equation} \label{Fthreegibnotorb}
0\to L_1^{\gcd(m_i, m_j, d)} \to  L_1^{q_{ij}} \to H^1(E, F^\ast(\Ocal_{\Pro^1}(-1)) \to  L_1^{g_{ij} + \gcd(m_i, m_j, d)-1} 
\oplus \left(\bigoplus_{i=1}^{r-1} L_{\frac{i}{r_{ij}}}\right)^{q_{ij}}
\to 0.
\end{equation}

It follows that:

\begin{equation}\label{eq:fflnotorb}
    R^1\Pi_\ast F^\ast(\Ocal_{\Pro^1}(-1))_{|\Gamma_{\{i,j\}}} =L_1^{q_{ij}+g_{ij}-1}\oplus\left( \bigoplus_{i=1}^{r_{ij}-1} L_{\frac{i}{r_{ij}}}\right)^{\oplus q_{ij}}.
\end{equation}

One can observe that \eqref{eq:fflnotorb} formally agrees with \eqref{eq:bundlenotorbthirdfixed} upon substituting $g$ with $g_{ij}$ and $q_4$ with $q_{ij}$. The second Chern class of  $R^1\Pi_\ast F^\ast(\Ocal_{\Pro^1}(-1))_{|\Gamma_{\{i,j\}}}$ therefore may be read off from the $t^2$ coefficient of $C_{1,2,3}$.

The contribution of $\Gamma_{i,j}$ to the localization computation of \eqref{eq:auxintnotorb} is then:
\begin{equation}
\Cont(\Gamma_{i,j}) = \frac{3d^3 + 12g_{ij}d^2 + 6d^2q_{ij} - 16d^2 + 12g_{ij}^2d + 12g_{ij}dq_{ij} + 3{q_{ij}}^2d - 36g_{ij}d - 18dq_{ij} + 24d - 2{q_{ij}}^2}{24d^2}
\end{equation}

The theorem readily follows by solving  the equation
\begin{eqnarray}
0 & =& \sum_{I\in [3]} \Cont(\Gamma_I) \nonumber \\
\end{eqnarray}
for $\lambda_1$ and using the Riemann-Hurwitz formula to substitute 
$g = d+1 - (q_1+q_2+q_3+q_4)/2$ and $g_{ij} = 1- (q_i+q_j+q_{ij}-d)/2.$ 
\end{proof}

\subsection{Graph formula for $\lambda_1$}

In this section we prove Theorem \ref{thm:gf}, expressing the class $\lambda_1$ on a general space of cyclic admissible covers as a linear combination of boundary strata and of the classes $\psi_i$ and $\kappa_1$.

Since the group $A_1(\Adm{m_1, \ldots, m_n  }{d})$ is generated by classes of  boundary curves $C_{(X,Y,Z,W)}$,  to establish formula \eqref{eq:gf} it suffices to show the truth of the numerical equations following from intersecting with each boundary curve.

Choose a bijection $b:[4]\to \{X,Y,Z,W\}$. From Theorem \ref{thm:dimonenotorb}, it follows:
\begin{eqnarray}\label{eq:gff}
C_{(X,Y,Z,W)}\cdot \lambda_1 &= & \frac{1}{24 d^2} \left( \sum_{I\in \Pcal([4])}(-1)^{|I|} {\gcd}^2\left(\sum_{j\in \left(\bigcup_{i\in I} b(i)\right)} m_j, d\right)\right),
\end{eqnarray}
where we don't worry about reducing anything modulo $d$ as that operation is irrelevant when then taking a $\gcd$ with $d$ itself.

From standard boundary intersection theory in $\overline{M}_{0,n}$, together with the fact that $\Adm{m_1, \ldots, m_n  }{d}$ is a $B(\Z/d\Z)$ gerbe over
$\overline{M}_{0,n}$, one has:

\begin{equation}
 C_{(X,Y,Z,W)}\cdot \Delta_J = 
 \left\{
 \begin{array}{cl}
   \frac{(-1)^{|I|}}{d}   &  I\in \Pcal([4]), J = \bigcup_{i\in I} b(i),\\
     0 & \mbox{else}. 
 \end{array}
 \right.
\end{equation}

Observe  that the formula holds also when some of the sets $X,Y,Z,W$ are  singletons because of the definition $\Delta_{\{j\}} = \Delta_{[n]\smallsetminus\{j\}} = -\psi_j$. It is now immediate to verify that intersecting  $C_{(X,Y,Z,W)}$ with the left hand side of \eqref{eq:gf} produces the left hand side of \eqref{eq:gff},  concluding the proof of Theorem \ref{thm:gf}.

\bibliographystyle{alpha}
\bibliography{biblio}

\end{document}